\newtheorem{theorem}{Theorem}
\newtheorem{lemma}[theorem]{Lemma}
\newtheorem{claim}[theorem]{Claim}
\theoremstyle{definition}
\newtheorem{definition}[theorem]{Definition}
\newtheorem{question}[theorem]{Question}
\newcounter{cases}[theorem]
\newtheorem{case}[cases]{Case}
\theoremstyle{remark}
\newtheorem{remark}[theorem]{Remark}
\DeclareMathOperator{\SAME}{Same}
\DeclareMathOperator{\DIFFERENT}{Different}
\newcommand{\at}{\text{at }}
\newcommand{\G}{\mathcal{G}}
\newcommand{\N}{\mathcal{N}}
\renewcommand{\H}{\mathcal{H}}
\newcommand{\R}{\mathcal{R}}
\newcommand{\X}{\mathcal{X}}
\newcommand{\Y}{\mathcal{Y}}
\newcommand{\V}{\mathcal{V}}
\newcommand{\W}{\mathcal{W}}
\newcommand{\Z}{\mathcal{Z}}
\newcommand{\A}{\mathcal{A}}
\newcommand{\B}{\mathcal{B}}
\newcommand{\F}{\mathcal{F}}
\begin{document}

\title{Left-orderable Computable Groups}

\author[Matthew Harrison-Trainor]{Matthew Harrison-Trainor}
\address{Group in Logic and the Methodology of Science\\
University of California, Berkeley\\
 USA}
\email{matthew.h-t@berkeley.edu}
\urladdr{\href{http://www.math.berkeley.edu/~mattht/index.html}{www.math.berkeley.edu/$\sim$mattht}}

\thanks{The author was partially supported by the Berkeley Fellowship and NSERC grant PGSD3-454386-2014. The author would like to thank Antonio Montalb\'an for reading and commenting on a draft of this article.}

\begin{abstract}
Downey and Kurtz asked whether every orderable computable group is classically isomorphic to a group with a computable ordering. By an order on a group, one might mean either a left-order or a bi-order. We answer their question for left-orderable groups by showing that there is a computable left-orderable group which is not classically isomorphic to a computable group with a computable left-order. The case of bi-orderable groups is left open.
\end{abstract}

\maketitle

\section{Introduction}

A left-ordered group is a group $\G$ together with a linear order $\leq$ such that if $a \leq b$, then $c a \leq c b$. $\G$ is right-ordered if instead whenever $a \leq b$, $a c \leq b c$, and bi-ordered if $\leq$ is both a left-order and a right-order. A group which admits a left-ordering is called left-orderable, and similarly for right- and bi-orderings. A group is left-orderable if and only if it is right-orderable. Some examples of bi-orderable groups include torsion-free abelian groups and free groups \cite{Shimbireva47,Vinogradov49,Bergman90}. The group $\langle x,y : x^{-1} y x = y^{-1} \rangle$ is left-orderable but not bi-orderable. For a reference on orderable groups, see \cite{KopytovMedvedev96}.

In this paper, we will consider left-orderable computable groups. A computable group is a group with domain $\omega$ whose group operation is given by a computable function $\omega \times \omega \to \omega$. Downey and Kurtz \cite{DowneyKurtz} showed that a computable group, even a computable abelian group, which is orderable need not have a computable order. If a computable group does admit a computable order, we say that it is computably orderable. Of course, by the low basis theorem, every orderable computable group has a low ordering.

For an abelian group, any left-ordering (or right-ordering) is a bi-ordering. An abelian group is orderable if and only if it is torsion-free. Given a computable torsion-free abelian group $\G$, Dobritsa \cite{Dobritsa} showed that there is another computable group $\H$, which is classically isomorphic to $\G$, which has a computable $\mathbb{Z}$-basis. Note that $\H$ need not be computably isomorphic to $\G$. Solomon \cite{Solomon} noted that a $\mathbb{Z}$-basis for a torsion-free abelian group computes an ordering of that group. Hence every orderable computable abelian group is classically isomorphic to a computably orderable group.

Downey and Kurtz asked whether this is the case even for non-abelian groups:
\begin{question}[Downey and Kurtz \cite{DowneyRemmel}]
Is every orderable computable group classically isomorphic to a computably orderable group?
\end{question}
\noindent If one takes ``orderable'' to mean ``left-orderable'' then we give a negative answer to this question. (We leave open the question for bi-orderable groups.)
\begin{theorem}\label{thm:main}
There is a computable left-orderable group which has no presentation with a computable left-ordering.
\end{theorem}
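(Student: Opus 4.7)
The plan is to construct $\G$ by a priority argument that meets, for each pair of indices $(e,f)$, a requirement of the form
\[
\R_{e,f}: \text{if } \varphi_e \text{ codes a computable group } \H \cong \G, \text{ then } \varphi_f \text{ is not a left-ordering of } \H.
\]
The classical isomorphism $\H \cong \G$ may not be computable, so to meet $\R_{e,f}$ I will need to diagonalize against features that are invariants of $\G$ up to isomorphism, rather than against $\G$ through a fixed identification.

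First I would locate, inside $\G$, a distinguished sequence of pairs $(a_n, b_n)_{n \in \omega}$ characterized purely by the abstract group structure of $\G$---for instance, as the canonical generators of successive terms of an algebraically definable chain of subgroups, or as the unique elements with specified conjugacy or commutator properties. I would then code a fixed noncomputable set $A$ (say, $A = \emptyset'$) by arranging that in every left-ordering of $\G$ the relative order of $a_n$ and $b_n$ records whether $n \in A$. Because the pair $(a_n, b_n)$ is identified by an isomorphism invariant, this coding transfers to any copy $\H$: a computable left-ordering on $\H$ would permit us to compute $A$, contradicting its noncomputability.

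For the construction, $\G$ would be built as a direct limit $\bigcup_s \G_s$ of finitely generated left-orderable groups, using small building blocks (e.g., copies of the Klein-bottle group $\langle x, y : x^{-1} y x = y^{-1}\rangle$, free products of cyclic groups, or HNN extensions) whose left-orderings are constrained enough to force the desired sign on $(a_n, b_n)$. Each time the approximation to $A$ changes, I would introduce new generators and relations that reverse the forced order of the relevant $(a_n, b_n)$, taking care that the modified $\G_s$ remains left-orderable and that previous commitments are preserved.

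The main obstacle I anticipate is the twin demand of (i) algebraic rigidity of the distinguished pairs $(a_n, b_n)$, strong enough that any classical isomorphism $\phi : \G \to \H$ must carry $(a_n, b_n)$ to a canonically identifiable pair in $\H$, and (ii) enough flexibility in the construction to adjust the coding freely while preserving left-orderability of the direct limit. Balancing these requires a careful choice of building blocks: rigid enough to pin down the $(a_n, b_n)$ invariantly, yet ordered flexibly enough that the stage-by-stage adjustments do not block the limit ordering.
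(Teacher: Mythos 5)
Your outline has the right general shape---code into the relative order of distinguished pairs, then diagonalize against computable orderings of computable copies---but it elides the entire technical content that makes this work, and one of your steps cannot be carried out as stated. The central difficulty, which you flag but do not resolve, is producing pairs $(a_n,b_n)$ that are both controllable (so relations can force their relative order) and recognizable in an arbitrary computable copy. Isomorphism-invariance alone is useless here: an isomorphism-invariant but only $\Pi^0_2$-definable pair cannot be located effectively from a computable presentation, so you could not read off $A$ even with a computable ordering in hand. You need the characterizing property to be \emph{existential} (so that a search over the copy will eventually confirm it). The paper's whole apparatus---the semidirect product $\G = \N \rtimes (\H/\R)$ with $\N$ free and $\H/\R$ torsion-free abelian, the analysis of which elements of $\G$ commute (Lemma \ref{lem:commute} and its consequences), and the resulting existential definability of $(\H/\R)^\G$ and of the sets $\{n\alpha_i\}^\G$, $\{n\beta_i\}^\G$, etc.---exists precisely to manufacture existentially definable surrogates for $\alpha_i$ and $\beta_i$. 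Note in particular that the coding elements cannot live in the abelian part alone: $\H/\R$ is torsion-free abelian, so nothing there is existentially definable; it is the action on the free group $\N$ by conjugation, with its finite orbits, that makes detection possible. Your proposed building blocks (Klein-bottle group, free products, HNN extensions) do not obviously supply anything like this structure, and you give no argument that they do.

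The second problem is the ``reverse the forced order'' step. If your direct limit is via subgroup inclusions $\G_s \hookrightarrow \G_{s+1}$, then any order relation between $a_n$ and $b_n$ that holds in every left-ordering of $\G_s$ persists in every left-ordering of any left-orderable overgroup; it cannot be reversed. If instead you pass to quotients (i.e., add relations), you can constrain the order but not flip a constraint already in force. The paper sidesteps this entirely: until $\psi(i)$ converges there is \emph{no} relation between $\alpha_i$ and $\beta_i$ (so neither sign relationship is forced), and when $\psi(i)$ converges the relation $p_i^t\alpha_i = \pm q_i^t\beta_i$ is added once and permanently. This is also why the paper diagonalizes against a list of candidate (group, ordering) pairs---so that a single, irrevocable commitment per $i$ suffices---rather than attempting to directly code a fixed noncomputable set with changing approximations. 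Your priority/injury framing is unnecessary overhead once the construction is set up this way; the paper's argument is injury-free.
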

Our strategy is to build a group \[ \G = \N \rtimes \H / \R \] and code information into the finite orbits of certain elements of $\N$ under inner automorphisms given by conjugating by elements of $\H / \R$. This strategy cannot work to build a bi-orderable group, as in a bi-orderable group there is no generalized torsion---i.e., no product of conjugates of a single element can be equal to the identity---and hence no inner automorphism has a non-trivial finite orbit. We leave open the case of bi-orderable groups.

\section{Notation}

We will use caligraphic letter such as $\G$, $\N$, and $\H$ to denote groups. For free groups, we will use upper case latin letters such as $A$, $B$, $C$, $U$, $V$, and $W$ to denote words, while using lower case letters such as $a$, $b$, and $c$ to denote letter variables. We use $\varepsilon$ for the empty word, $0$ for the identity element of abelian groups, and $1$ for the identity element of non-abelian groups (except for free groups, where we use $\varepsilon$).

\section{The Construction}

Fix $\psi$ a partial computable function which we will specify later (see Definition \ref{def:psi}). Let $p_i$, $q_i$, and $r_i$ be a partition of the odd primes into three lists.\footnote{We use the fact that $2$ does not appear in these lists in Lemma \ref{lem:powers}.} Let $\H$ be the free abelian group on $\alpha_i$, $\beta_i$, and $\gamma_i$ for $i \in \omega$. We write $\H$ additively. Let $\R$ be the set of relations
\[ \R = \{ \R_{i,t} : \psi_{\at t}(i) \downarrow \} \]
where
\[ \R_{i,t} = 
\begin{cases}
p_i^t \alpha_i = q_i^t \beta_i & \text{ if $\psi_{\at t}(i) = 0$} \\
p_i^t \alpha_i = -q_i^t \beta_i & \text{ if $\psi_{\at t}(i) = 1$} \\
\end{cases}.\]
By $\psi_{\at t}(i) = 0$, we mean that the  computation $\psi(i)$ has converged exactly at stage $t$ (but not before) and equals zero.

The idea is that these relations force, for any ordering $\leq$ on $\H / \R$, that if $\psi(i) = 0$ then $\alpha_i > 0 \Longleftrightarrow \beta_i > 0$ (and if $\psi(i) = 1$ then $\alpha_i > 0 \Longleftrightarrow \beta_i < 0$). The strategy is, in a very general sense, to use $\psi$ to diagonalize against computable orderings of $\H / \R$. The semidirect product will add enough structure to allow us to find $\alpha_i$ and $\beta_i$ within a computable copy of $\G$. (One cannot find $\alpha_i$ and $\beta_i$ within a copy of $\H/\R$, since $\H/\R$ is a torsion-free abelian group.) Note that
\[ \H / \R = \left( \bigoplus_i \langle \alpha_i, \beta_i \rangle / \R_{i} \right) \oplus \left(\bigoplus \langle \gamma_i \rangle\right) \]
where $\R_i = \R_{i,t}$ if $\psi_{\at t}(i) \downarrow$ for some $t$, or no relation otherwise.
Define
\begin{align*}
\V_i &= \R \cup \{p_i \alpha_i = 0 \} & \W_i &= \R \cup \{q_i \beta_i = 0 \} & \X_i &= \R \cup \{r_i \gamma_i = 0 \} \\
\Y_i &= \R \cup \{\alpha_i = \gamma_i \} & \Z_i &= \R \cup \{\beta_i = \gamma_i \}.
\end{align*}
Let $\N$ be the free (non-abelian) group on the letters
\begin{align*}
\{ u_i : i \in \omega \} \cup \{ v_{i,g} : g \in \H / \V_i, i \in \omega \} \cup \{ w_{i,g} : g \in \H / \W_i, i \in \omega \} \\
\cup \{ x_{i,g} : g \in \H / \X_i, i \in \omega \} \cup \{ y_{i,g} : g \in \H / \Y_i, i \in \omega\} \cup \{ z_{i,g} : g \in \H / \Z_i, i \in \omega \} .
\end{align*}
Let $\G = \N \rtimes (\H / \R)$, with $g \in \H / \R$ acting on $\N$ via the automorphism $\varphi_g$ as follows:
\begin{align*}
\varphi_g(u_i) &= u_i & \varphi_g(v_{i,h}) &= v_{i,\bar{g} + h} & \varphi_g(w_{i,h}) &= w_{i,\bar{g} + h} \\
\varphi_g(x_{i,h}) &= x_{i,\bar{g} + h} & \varphi_g(y_{i,h}) &= y_{i,\bar{g} + h} & \varphi_g(z_{i,h}) &= z_{i,\bar{g} + h}.
\end{align*}
Here, $\bar{g}$ is the image of $g$ under the quotient map $\H / \R \to \H / \V_i$ (or $\H / \W_i$, $\H / \X_i$, etc.). Recall that the semidirect product $\G = \N \rtimes (\H / \R)$ is the group with underlying set $\N \times (\H/\R)$ with group operation
\[ (n,g)(m,h) = (n \varphi_g(m), g + h).\]
Note that $\varphi_g$ permutes the letters of $\N$, and so given a word $A \in \N$, $\varphi_g(A)$ is a word of the same length as $A$. We write $\G$ multiplicatively.

\begin{lemma}
$\H / \R$ has a computable presentation.
\end{lemma}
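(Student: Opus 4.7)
My plan is to exhibit $\H/\R$ as isomorphic to the free abelian group of countably infinite rank, which has an obvious computable presentation.

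The starting point is the decomposition already given in the excerpt,
\[ \H / \R \cong \left( \bigoplus_i \langle \alpha_i, \beta_i \rangle / \R_i \right) \oplus \bigoplus_i \langle \gamma_i \rangle. \]
Each $\langle \gamma_i \rangle$ is a copy of $\mathbb{Z}$. I would then analyze the summands $\langle \alpha_i, \beta_i \rangle / \R_i$ by cases. If $\psi(i) \uparrow$, then $\R_i$ is empty and the summand is $\mathbb{Z}^2$. If $\psi(i) \downarrow$ at some stage $t$, then $\R_i$ consists of the single relation $p_i^t \alpha_i = \pm q_i^t \beta_i$, so the summand is $\mathbb{Z}^2 / \langle (p_i^t, \mp q_i^t) \rangle$. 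Since $p_i$ and $q_i$ are distinct odd primes, $\gcd(p_i^t, q_i^t) = 1$; a standard Smith normal form argument (equivalently, pick $a,b$ with $a p_i^t + b q_i^t = 1$ to extend $(p_i^t, \mp q_i^t)$ to a $\mathbb{Z}$-basis of $\mathbb{Z}^2$) shows the quotient is isomorphic to $\mathbb{Z}$.

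Hence every summand is free abelian of rank $1$ or $2$, so $\H/\R$ is a direct sum of free abelian groups and is therefore free abelian. Its rank is exactly $\aleph_0$: the $\gamma_i$ already contribute countably many independent $\mathbb{Z}$-factors, while the ambient group is countable. Thus $\H/\R \cong \bigoplus_\omega \mathbb{Z}$, which admits the obvious computable presentation (finitely supported integer-valued sequences under componentwise addition).

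I do not expect a real obstacle here; the entire argument hinges on the observation that the distinctness of $p_i$ and $q_i$ as primes forces each single-relation quotient to be free of rank one, after which the conclusion is automatic and no effective knowledge of $\R$ is needed to build the computable copy. As a sanity check one could instead invoke Dobritsa's theorem, cited earlier in the introduction: $\H/\R$ is a c.e.-presented torsion-free abelian group, so it has a computable presentation. The structural argument above is preferable, however, because it additionally identifies the isomorphism type of $\H/\R$.
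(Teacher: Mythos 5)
Your proof is correct, but it takes a genuinely different route from the paper's. The paper shows directly that the word problem in the presentation on the given generators $\alpha_i,\beta_i,\gamma_i$ is decidable: the only way $\ell\alpha_i = m\beta_i$ can hold is if it follows from a relator $\R_{i,t}$, and since $p_i,q_i \geq 3$ we have $p_i^t, q_i^t > t$, so only stages $t \leq |\ell|,|m|$ of $\psi$ need to be inspected. This produces a computable presentation in which $\alpha_i,\beta_i,\gamma_i$ (and the quotient map from $\H$) are computably located. You instead identify the classical isomorphism type — each $\langle\alpha_i,\beta_i\rangle/\R_i$ is free abelian of rank $1$ or $2$ because $\gcd(p_i^t,q_i^t)=1$ makes the relator primitive in $\mathbb{Z}^2$, so $\H/\R \cong \bigoplus_\omega\mathbb{Z}$ — and then invoke the standard computable copy of $\bigoplus_\omega\mathbb{Z}$. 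That argument is sound and proves the lemma exactly as stated, and it gives the extra information of the isomorphism type. The trade-off is that the paper's proof is deliberately stronger than the lemma's statement: the very next lemma (that $\G$ has a computable presentation) needs to compute the action of $\H/\R$ on $\N$, which requires computably tracking $\alpha_i,\beta_i,\gamma_i$ through the quotient maps $\H/\R \to \H/\V_i$, etc. Your abstract isomorphism $\H/\R \cong \bigoplus_\omega\mathbb{Z}$ gives no computable handle on the images of $\alpha_i,\beta_i,\gamma_i$ (indeed those images encode the halting behavior of $\psi$), so this argument would not carry the sequel.

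One small caveat on your ``sanity check'': Dobritsa's theorem, as cited in the paper, takes as input a \emph{computable} torsion-free abelian group and produces a classically isomorphic computable copy with a computable basis. It does not, as stated, take a merely c.e.\ presentation as input, so invoking it here would be circular. The structural argument you give is the one that actually works.
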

\begin{proof}
It suffices to show that we can decide whether or not a relation of the form
\[ \sum_{i=1}^k \ell_i \alpha_i + \sum_{i=1}^k m_i \beta_i + \sum_{i=1}^k n_i \gamma_i = 0 \]
holds. This sum is equal to zero if and only if each $n_i = 0$ and for each $i$ we have $\ell_i \alpha_i + m_i \beta_i = 0$. So it suffices to decide, for a given $\ell$ and $m$ in $\mathbb{Z}$, whether $\ell \alpha_i = m \beta_i$.

Looking at $\R$, $\ell \alpha_i = m \beta_i$ if and only if either
\begin{enumerate}
	\item for some $t$, $\psi_{\at t}(i) = 0$ and there is $s \in \mathbb{Z}$ such that $\ell = s p_i^t$ and $m = s q_i^t$ or
	\item for some $t$, $\psi_{\at t}(i) = 1$ and there is $s \in \mathbb{Z}$ such that $\ell = s p_i^t$ and $m = - s q_i^t$.
\end{enumerate}
If $t > |\ell|$ or $t > |m|$ then neither of these can hold. So we just need to check, for each $t \leq |\ell|,|m|$, whether $\psi_{\at t}(i)$ converges.
\end{proof}

\begin{lemma}
$\G$ has a computable presentation.
\end{lemma}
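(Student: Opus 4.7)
The plan is to build a computable presentation of $\G$ in three stages: first handle the generating set of $\N$, then $\N$ itself as a free group on this generating set, and finally the semidirect product. The key preliminary step is uniform decidability of equality in the quotients $\H/\V_i$, $\H/\W_i$, $\H/\X_i$, $\H/\Y_i$, and $\H/\Z_i$.

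For each such quotient, the underlying group has exactly the form of the previous lemma except that the $i$-th coordinate $\langle \alpha_i, \beta_i, \gamma_i \rangle / \R_i$ is replaced by its further quotient by one additional relation (e.g.\ $p_i \alpha_i = 0$ for $\V_i$). A general element is zero iff every coordinate is zero; for $j \neq i$ this reduces to the previous lemma, and for $j = i$ I would decide when $\ell \alpha_i + m \beta_i + n \gamma_i = 0$ in one of a small handful of concrete finitely generated abelian groups whose structure depends on whether and when $\psi(i)$ converges. In every case the relevant convergence stage, if it matters at all, is bounded by $\max(|\ell|, |m|, |n|)$, so a bounded search suffices and the decision is uniform in $i$.

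Using this, I would give the generating set of $\N$ a computable indexing by picking, for each equivalence class $g \in \H/\V_i$ (and analogously for the other four families), a canonical $\H$-representative, say the least lift under a fixed effective enumeration of $\H$. The free group on a computable set of generators then has a standard computable presentation: freely reduced words form a computable subset of finite sequences, multiplication is concatenation followed by free reduction, and inversion is reversal with flipped signs. Finally, $\G$ has underlying set $\N \times (\H/\R)$, which is computable, and operation $(n,g)(m,h) = (n \varphi_g(m), g+h)$. The right coordinate is computable since $\H/\R$ is. For the left coordinate I observe that $\varphi_g$ is a letter-substitution that sends $v_{i,h}$ to $v_{i,\bar g + h}$, where $\bar g$ is the image of $g$ under the quotient map $\H/\R \to \H/\V_i$ (and analogously for the other four letter types); this map is induced by the identity on generators, hence computable given the uniform decidability above. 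Applying the substitution letter-by-letter to $m$ and freely reducing computes $\varphi_g(m)$.

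The main obstacle is the first step, uniform decidability of the five families of quotients. But this is a mild extension of the argument in the previous lemma, with the only new ingredient being a case analysis on the finite abelian group obtained in the $i$-th coordinate after imposing $\R_i$ and one extra relation, so I expect no surprises.
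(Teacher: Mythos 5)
Your proposal is correct and takes essentially the same route as the paper: reduce to uniform decidability of the word problem in each quotient $\H/\V_i$, $\H/\W_i$, $\H/\X_i$, $\H/\Y_i$, $\H/\Z_i$ by the same coordinate-wise bounded-search argument used for $\H/\R$, observe that this makes the quotient maps, the generating set of $\N$, and the action $\varphi$ computable, and then take the semidirect product. You spell out more of the bookkeeping (canonical representatives, free-reduction mechanics) than the paper does, but the argument is the same.
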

\begin{proof}
We just need to check that $\H / \V_i$, $\H / \W_i$, and so on have computable presentations. We will see that the embeddings of the computable presentation (from the previous lemma) of $\H/\R$ into these presentations are computable. Then the action $\varphi$ of $\H / \R$ on $\N$ is computable. We can construct a computable presentation of $\G$ as the semidirect product $\N \rtimes (\H / \R)$ under this computable action.

We need to decide whether in $\H / \V_i$ we have a relation
\[ \sum_{j=1}^k \ell_j \alpha_j + \sum_{j=1}^k m_j \beta_j + \sum_{j=1}^k n_j \gamma_j = 0. \]
It suffices to decide, for a given $j$, whether
\[ \ell \alpha_j + m \beta_j + n \gamma_j = 0. \]
If $j \neq i$, this is just as in the previous lemma. Otherwise, this holds if and only if $p_i$ divides $\ell$, $q^t$ divides $m$ for some $t$ with $\psi_{\at t}(i) \downarrow$, and $n = 0$. As before, we can check this computably.

The other cases---for $\H/\W_i$, $\H/\X_i$, and so on---are similar.
\end{proof}

\begin{lemma}
$\H / \R$ is a torsion-free abelian group.
\end{lemma}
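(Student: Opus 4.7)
The plan is to exploit the direct sum decomposition already pointed out in the paper, namely
\[ \H / \R = \left( \bigoplus_i \langle \alpha_i, \beta_i \rangle / \R_{i} \right) \oplus \left(\bigoplus_i \langle \gamma_i \rangle\right). \]
Since a direct sum of abelian groups is torsion-free if and only if each summand is, it suffices to verify torsion-freeness of each $\langle \gamma_i\rangle$ (which is trivially $\cong \mathbb{Z}$) and of each $\langle \alpha_i, \beta_i\rangle / \R_i$.

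First I would justify the decomposition itself. The free abelian group $\H$ splits as $\bigoplus_i \langle \alpha_i,\beta_i\rangle \oplus \bigoplus_i \langle \gamma_i\rangle$, and each relation $\R_{i,t}$ involves only $\alpha_i$ and $\beta_i$ (for a single fixed $i$), so quotienting is performed coordinate-wise. In particular there is at most one relation per index $i$, because $\psi_{\at t}(i)\downarrow$ can occur for at most one $t$.

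Next I would handle a fixed summand $\langle \alpha_i, \beta_i \rangle / \R_i$. If $\psi(i)$ never converges, this summand is just $\mathbb{Z}^2$, trivially torsion-free. Otherwise $\R_i$ consists of the single relation $p_i^t \alpha_i \mp q_i^t \beta_i = 0$, so the summand is $\mathbb{Z}^2 / \langle (p_i^t,\mp q_i^t)\rangle$. The key observation is that $p_i$ and $q_i$ are \emph{distinct} primes (coming from disjoint lists), so $\gcd(p_i^t, q_i^t) = 1$ and the generating vector $(p_i^t, \mp q_i^t)$ is primitive in $\mathbb{Z}^2$. A primitive vector can be completed to a basis of $\mathbb{Z}^2$ (by Bezout), so the quotient is isomorphic to $\mathbb{Z}$, which is torsion-free.

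There isn't really a hard step here; the only thing one has to notice is that the relations for different $i$ do not interact, and that within a single $i$ the use of distinct primes makes the relation vector primitive, which is exactly what prevents any torsion from appearing in the quotient.
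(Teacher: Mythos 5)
Your proof is correct and uses the same direct-sum decomposition and the same key observation as the paper, namely that $p_i$ and $q_i$ are distinct primes, so the relation vector $(p_i^t, \mp q_i^t)$ is primitive. The only difference is cosmetic: the paper verifies torsion-freeness of $\langle\alpha_i,\beta_i\rangle/\R_i$ by a direct gcd computation on a putative torsion element, whereas you identify the quotient explicitly as $\mathbb{Z}$ by completing the primitive vector to a basis of $\mathbb{Z}^2$.
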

\begin{proof}
$\H/\R$ is abelian as $\H$ was abelian. Recall that
\[ \H / \R = \left(\bigoplus_i \langle \alpha_i, \beta_i \rangle / \R_{i} \right) \oplus \left(\bigoplus_i \langle \gamma_i \rangle \right) \]
where $\R_i = \R_{i,t}$ if $\psi_{\at t}(i) \downarrow$ for some $t$, or no relation otherwise.
So it suffices to show that $\langle \alpha_i, \beta_i \rangle / \R_{i}$ is torsion-free. If $\R_i$ is no relation, then this is obvious. So now suppose that $\psi_{\at t}(i) = 0$ and that
\[ k (m \alpha_i + n \beta_i) = \ell (p_i^t \alpha_i - q_i^t \beta_i) \]
in $\langle \alpha_i, \beta_i \rangle$. Since $\H$ is torsion-free, we may assume that $\gcd(k,\ell)=1$. Then $k m = \ell p_i^t$ and $k n = -\ell q_i^t$. So we must have $k= \pm 1$, in which case $m \alpha_i + n \beta_i$ is already zero in $\langle \alpha_i, \beta_i \rangle / \R_{i}$. Thus $\langle \alpha_i, \beta_i \rangle / \R_{i}$ is torsion-free. The case where $\psi_{\at t}(i) = 1$ is similar.
\end{proof}

\begin{lemma}
$\G$ is left-orderable.
\end{lemma}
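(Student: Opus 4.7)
The plan is to produce an explicit positive cone on $\G$ by combining a positive cone on $\H/\R$ with one on $\N$, lexicographically with $\H/\R$ as the dominant factor. By the preceding lemma, $\H/\R$ is torsion-free abelian and hence bi-orderable; I would fix a positive cone $P$ for any such ordering. Since $\N$ is a free group it is left-orderable, so I can also fix a positive cone $Q$ on $\N$.

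I would then define
\[
\mathcal{P} = \{(n, g) \in \G : g \in P\} \cup \{(n, 0) \in \G : n \in Q\}
\]
and verify that $\mathcal{P}$ is a positive cone for a left-order on $\G$; that is, that $\G$ is the disjoint union $\mathcal{P} \sqcup \{1\} \sqcup \mathcal{P}^{-1}$ and that $\mathcal{P} \cdot \mathcal{P} \subseteq \mathcal{P}$. The trichotomy is immediate: given $(n, g) \in \G$, inspect $g$ first (using $\H/\R = P \sqcup \{0\} \sqcup (-P)$), and when $g = 0$ fall back on the trichotomy for $\N$. Closure under multiplication is a short case analysis on the $\H/\R$-coordinates in $(n_1, g_1)(n_2, g_2) = (n_1 \varphi_{g_1}(n_2), g_1 + g_2)$: whenever $g_1$ or $g_2$ lies in $P$, so does $g_1 + g_2$, and the product is in $\mathcal{P}$ regardless of its $\N$-coordinate; if $g_1 = g_2 = 0$ then the product equals $(n_1 n_2, 0)$, and closure reduces to $Q \cdot Q \subseteq Q$.

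I do not foresee any substantive obstacle. The point worth flagging, because it might look like a problem, is that $Q$ is \emph{not} required to be $\varphi$-invariant; in fact no $\varphi$-invariant left-order on $\N$ exists in general, since an element like $\alpha_i \in \H/\R$ has finite-order image in $\H/\V_i$ and hence acts with a finite orbit on the letters $\{v_{i, h}\}$, which any invariant left-ordering would be forced to respect cyclically. Invariance turns out to be unnecessary for the construction above: the $\N$-coordinate only influences membership in $\mathcal{P}$ when both $\H/\R$-coordinates are $0$, and in that case the action $\varphi$ is trivial.
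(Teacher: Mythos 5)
Your positive cone $\mathcal{P}$ determines exactly the lexicographic left-order (with the $\H/\R$-coordinate dominant and $Q$ as tiebreaker when that coordinate vanishes) that the paper writes down pointwise in its Claim, namely $(a,b) \leq (a',b')$ iff $b <_\B b'$, or $b = b'$ and $\varphi_{b^{-1}}(a) \leq_\A \varphi_{b^{-1}}(a')$; one checks that $(a,b)^{-1}(a',b') \in \mathcal{P} \cup \{1\}$ is equivalent to that condition, so this is the same proof, recast via positive cones in a way that makes the verification a bit cleaner by avoiding the explicit conjugation. Your remark that $Q$ need not (and indeed cannot) be $\varphi$-invariant is correct and consistent with the paper, whose argument also makes no invariance assumption.
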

\begin{proof}
Since $\H / \R$ is a torsion-free abelian group, it is bi-orderable. $\N$ is bi-orderable as it is a free group. Then by the following claim, $\G$ is left-orderable (see Theorem 1.6.2 of \cite{KopytovMedvedev96}).

\begin{claim}
Let $\A \rtimes \B$ be a semi-direct product of left-orderable groups. Then $\A \rtimes \B$ is left-orderable.
\end{claim}
\begin{proof}
Let $\varphi$ be the action of $\B$ on $\A$. Let $\leq_\A$ and $\leq_\B$ be left-orderings on $\A$ and $\B$ respectively. Define $\leq$ on $\A \rtimes \B$ as follows: $(a,b) \leq (a',b')$ if $b <_\B b'$ or $b = b'$ and $\varphi_{b^{-1}}(a) \leq_\A \varphi_{b^{-1}}(a')$. This is clearly reflexive and symmetric. We must show that it is transitive and a left-ordering.

Suppose that  $(a,b) \leq (a',b') \leq (a'',b'')$. Then $b \leq_\B b' \leq_\B b''$. If $b <_\B b''$, then $(a,b) \leq (a'',b'')$, so suppose that $b = b' = b''$. Then
\[ \varphi_{b^{-1}}(a) \leq_\A \varphi_{b^{-1}}(a') = \varphi_{{b'}^{-1}}(a') \leq_\A \varphi_{{b'}^{-1}}(a'') = \varphi_{b^{-1}}(a'').\]
So $\varphi_{b^{-1}}(a) \leq_\A \varphi_{b^{-1}}(a'')$ and so $(a,b) \leq (a'',b'')$. Thus $\leq$ is transitive.

Given $(a,b) \leq (a',b')$ we must show that $(a'',b'')(a,b) \leq (a'',b'')(a',b')$. We have that
\[ (a'',b'')(a,b) = (a'' \varphi_{b''}(a),b'' b) \text{ and } (a'',b'')(a',b') = (a'' \varphi_{b''}(a'),b'' b').\]
If $b <_\B b'$, then $b'' b <_\B b'' b'$, and so $(a'',b'')(a,b) \leq (a'',b'')(a',b')$. Otherwise, if $b = b'$ and $\varphi_{b^{-1}}(a) \leq_\A \varphi_{b^{-1}}(a')$, then $b'' b = b'' b'$ and 
\begin{align*}
\varphi_{(b'' b)^{-1}}(a'' \varphi_{b''}(a)) &= \varphi_{(b'' b)^{-1}}(a'') \varphi_{b^{-1}}(a) \\
& \leq_\A \varphi_{(b'' b)^{-1}}(a'') \varphi_{b^{-1}}(a')\\
& = \varphi_{(b'' b)^{-1}}(a'' \varphi_{b''}(a')).
\end{align*}
So $(a'',b'')(a,b) \leq (a'',b'')(a',b')$.
\end{proof}\let\qed\relax\end{proof}

Note that if $\leq$ is any left-ordering on $\G$, if $\psi_{\at t}(i) = 0$ then $(\varepsilon,\alpha_i) > 1$ if and only if $(\varepsilon,\beta_i) > 1$. On the other hand, if $\psi_{\at t}(i) = 1$ then $(\varepsilon,\alpha_i) > 1$ if and only if $(\varepsilon,\beta_i) < 1$. Later, in Definition \ref{def:same}, we will define existential formulas $\SAME(i)$ and $\DIFFERENT(i)$ (with no parameters) in the language of ordered groups. We would like to have that for any left-ordering $\leq$ on $\G$, $(\G,\leq) \models \SAME(i)$ if and only if $(\varepsilon,\alpha_i) > 1 \Longleftrightarrow (\varepsilon,\beta_i) < 1$, and $(\G,\leq) \models \DIFFERENT(i)$ if and only if $(\varepsilon,\alpha_i) > 1 \Longleftrightarrow (\varepsilon,\beta_i) < 1$. We will not quite get this for every ordering $\leq$, but this will be true for those against which we want to diagonalize (see Lemma \ref{lem:same-diff}).

\begin{definition}\label{def:psi}
Fix a list $(\F_i,\leq_i)_{i \in \omega}$ of the (partial) computable structures in the language of ordered groups. Let $\psi$ be a partial computable function with $\psi(i) = 0$ if $(\F_i,\leq_i) \models \DIFFERENT(i)$ and $\psi(i) = 1$ if $(\F_i,\leq_i) \models \SAME(i)$. It is possible, a priori, that we have both $(\F_i,\leq_i) \models \SAME(i)$ and $(\F_i,\leq_i) \models \DIFFERENT(i)$; in this case, let $\psi(i)$ be defined according to whichever existential formula we find to be true first.
\end{definition}

In fact, we will discover from the following lemma that we cannot have both $(\F_i,\leq_i) \models \SAME(i)$ and $(\F_i,\leq_i) \models \DIFFERENT(i)$.

\begin{lemma}\label{lem:same-diff}
Fix $i$. Suppose that $\F_i$ is isomorphic to $\G$ and $\leq_i$ is a computable left-ordering of $\F_i$. Let $\leq$ be an ordering on $\G$ such that $(\G,\leq) \cong (\F_i,\leq_i)$. Then:
\begin{enumerate}
	\item $(\G,\leq) \models \SAME(i)$ if and only if $(\varepsilon,\alpha_i) > 1 \Longleftrightarrow (\varepsilon,\beta_i) > 1$.
	\item $(\G,\leq) \models \DIFFERENT(i)$ if and only if $(\varepsilon,\alpha_i) > 1 \Longleftrightarrow (\varepsilon,\beta_i) < 1$.
\end{enumerate}
\end{lemma}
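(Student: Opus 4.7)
The plan is to prove each equivalence in two directions, relying on the structure of the existential formulas $\SAME(i)$ and $\DIFFERENT(i)$ that will be introduced in Definition~\ref{def:same}. Since the formulas are existential, one direction consists of producing explicit witnesses while the other requires showing that any witnesses must force the desired sign relationship on $(\varepsilon,\alpha_i)$ and $(\varepsilon,\beta_i)$.

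For the ``easy'' direction: assuming the stated sign relationship between $(\varepsilon,\alpha_i)$ and $(\varepsilon,\beta_i)$, I would exhibit canonical witnesses built from $(\varepsilon,\alpha_i)$, $(\varepsilon,\beta_i)$, $(\varepsilon,\gamma_i)$ together with the auxiliary letters $(v_{i,0},1), (w_{i,0},1), (x_{i,0},1), (y_{i,0},1), (z_{i,0},1)$. These auxiliary letters were engineered precisely to serve as witnesses: each $v_{i,\cdot}$ has orbit of size (dividing) $p_i$ under conjugation by $(\varepsilon,\alpha_i)$ since $p_i\alpha_i=0$ in $\H/\V_i$, and analogously for the other generators, while the $\Y_i$- and $\Z_i$-relations bridge $\alpha_i$ and $\beta_i$ through $\gamma_i$. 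Taking inverses of these witnesses where appropriate handles the $\DIFFERENT(i)$ case.

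For the ``hard'' direction: suppose $(\G,\leq)\models\SAME(i)$ is witnessed by a tuple $\bar{c}\in\G$. I would project the witnesses through the quotient map $\G\to\H/\R$, whose kernel is $\N$. The conditions in $\SAME(i)$ should force the projected components to correspond to specific elements of $\H/\R$---because $p_i$, $q_i$, $r_i$ are distinct odd primes, the orbit-size constraints imposed by the formula should pin down the projections to nonzero multiples of $\alpha_i$, $\beta_i$, and $\gamma_i$ in the corresponding direct summands of $\H/\R = \bigoplus_i \langle\alpha_i,\beta_i\rangle/\R_i \oplus \bigoplus_i \langle\gamma_i\rangle$. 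Because $\H/\R$ is a torsion-free abelian group (hence bi-orderable), the signs of these multiples agree with the signs of $\alpha_i$, $\beta_i$, $\gamma_i$ themselves, transferring the sign information from the witnesses back to $(\varepsilon,\alpha_i)$ and $(\varepsilon,\beta_i)$.

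The main obstacle will be the hard direction: ruling out spurious witnesses that might satisfy the existential formula without reflecting the true sign pattern of $(\varepsilon,\alpha_i)$ and $(\varepsilon,\beta_i)$. The rigidity required here is what dictates the careful design of $\SAME(i)$---the use of three distinct primes $p_i, q_i, r_i$, the auxiliary letters of $\N$ with prescribed orbit sizes, and the $\gamma_i$-bridge given by the $y$- and $z$-letters. I expect the detailed verification to rely on Lemma~\ref{lem:powers} (to control which powers of elements admit the relevant orbits) and the classification of finite orbits of elements of $\N$ under the $\varphi_g$-action, ultimately reducing any witness tuple to one whose projection to $\H/\R$ has the same sign behavior as the canonical choice $\alpha_i, \beta_i, \gamma_i$.
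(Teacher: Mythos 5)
Your ``easy'' direction is essentially the paper's Lemma~\ref{lem:almost-done}(1)--(2): exhibit the canonical witnesses $(\varepsilon,\alpha_i)$, $(\varepsilon,\beta_i)$, $(\varepsilon,\gamma_i)$. (A small correction: those are the witnesses for $\SAME(i)$ itself; the letters $v_{i,0}, y_{i,0}, \dots$ appear as witnesses one level down, in the $\exists$-definitions of the sets $\{n\alpha_i : p_i \nmid n\}^\G$ etc.)

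Your ``hard'' direction, however, has a genuine gap, and it is exactly the gap the paper's hypotheses are designed to fill. You never use the assumption that $(\F_i,\leq_i)$ is \emph{computable}, and the self-referential definition of $\psi$ (Definition~\ref{def:psi}) does not appear in your argument at all. Without these, the implication you are trying to prove is false in general: the paper explicitly remarks, just before Definition~\ref{def:psi}, that the equivalence does \emph{not} hold for an arbitrary left-ordering $\leq$. The obstruction is in your step ``the signs of these multiples agree with the signs of $\alpha_i$, $\beta_i$, $\gamma_i$ themselves, transferring the sign information from the witnesses back to $(\varepsilon,\alpha_i)$ and $(\varepsilon,\beta_i)$.'' The witnesses $a,b$ for $\SAME(i)$ live in some conjugate $(B,0)(\H/\R)(B,0)^{-1}$, and a left-ordering is not conjugation-invariant, so the sign of $(B,0)(\varepsilon,\ell\alpha_i)(B,0)^{-1}$ need not agree with that of $(\varepsilon,\alpha_i)$. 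The bi-orderability of the abelian group $\H/\R$ buys you nothing here because $\leq$ is an arbitrary left-order on all of $\G$, not a bi-order.

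The missing idea is the diagonalization loop: since $(\F_i,\leq_i)$ is computable and $\SAME(i)$ is existential, $(\F_i,\leq_i)\models\SAME(i)$ implies $\psi(i){\downarrow}$, which \emph{creates} a relation $\R_{i,t}$, say $p_i^t\alpha_i = \pm q_i^t\beta_i$, in $\H/\R$. That relation is preserved by conjugation, so the witnesses satisfy $a^{p_i^t}b^{\mp q_i^t}=1$, and this group identity, combined with $a>1\Leftrightarrow b>1$, rules out the minus sign. Only now, from the surviving relation $p_i^t\alpha_i = q_i^t\beta_i$ (which holds in $\H/\R$ itself, not just in a conjugate), does one get $(\varepsilon,\alpha_i)>1 \Leftrightarrow (\varepsilon,\beta_i)>1$. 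This is the content of Lemma~\ref{lem:almost-done}(3), and without invoking $\psi(i){\downarrow}$ via the computability of $(\F_i,\leq_i)$, your argument cannot close.
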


This lemma will be proved later. We will now show how to use Lemma \ref{lem:same-diff} to complete proof. 

\begin{lemma}
$\G$ has no computable presentation with a computable ordering.
\end{lemma}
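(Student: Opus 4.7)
The plan is a diagonalization argument using the list $(\F_i,\leq_i)$ from Definition \ref{def:psi}. Suppose toward contradiction that for some index $i$, the pair $(\F_i,\leq_i)$ is a computable presentation of $\G$ together with a computable left-ordering. Pulling $\leq_i$ back along the isomorphism $\F_i \cong \G$ gives a left-ordering $\leq$ on $\G$ with $(\G,\leq) \cong (\F_i,\leq_i)$, and I aim to show this is impossible.

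First I will argue that $\psi(i) \downarrow$. The elements $(\varepsilon,\alpha_i)$ and $(\varepsilon,\beta_i)$ both sit in the torsion-free abelian subgroup $\{\varepsilon\} \times (\H/\R)$ of $\G$ and are nontrivial, so each is strictly comparable to $1$ under $\leq$. Hence they lie either on the same side of $1$ or on opposite sides, and by Lemma \ref{lem:same-diff} one of $(\G,\leq) \models \SAME(i)$ or $(\G,\leq) \models \DIFFERENT(i)$ must hold; via the isomorphism the corresponding formula holds in $(\F_i,\leq_i)$. Since $\SAME(i)$ and $\DIFFERENT(i)$ are existential and $(\F_i,\leq_i)$ is computable, the effective search that defines $\psi$ will terminate, so $\psi(i)$ is defined.

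Now the contradiction. Suppose $\psi(i) = 0$; then for some $t$ the relation $p_i^t \alpha_i = q_i^t \beta_i$ lies in $\R$, which in $\G$ gives $(\varepsilon,\alpha_i)^{p_i^t} = (\varepsilon,\beta_i)^{q_i^t}$. In any left-ordered group, raising an element to a positive exponent preserves its sign, so this equation forces $(\varepsilon,\alpha_i)$ and $(\varepsilon,\beta_i)$ to lie on the same side of $1$; Lemma \ref{lem:same-diff}(1) then yields $(\G,\leq) \models \SAME(i)$. On the other hand, by Definition \ref{def:psi} the value $\psi(i) = 0$ means $(\F_i,\leq_i) \models \DIFFERENT(i)$, and transferring through the isomorphism, $(\G,\leq) \models \DIFFERENT(i)$; Lemma \ref{lem:same-diff}(2) then says $(\varepsilon,\alpha_i)$ and $(\varepsilon,\beta_i)$ lie on opposite sides of $1$, contradicting what we just derived. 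The case $\psi(i) = 1$ is symmetric, using the relation $p_i^t \alpha_i = -q_i^t \beta_i$. Granting Lemma \ref{lem:same-diff}, which is the nontrivial ingredient and the main obstacle in the whole paper, the present lemma reduces to bookkeeping around the construction of $\R$.
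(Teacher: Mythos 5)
Your proof is correct and follows essentially the same route as the paper's: assume $(\F_i,\leq_i)$ presents $\G$ with a computable left-ordering, pull the ordering back to $\G$, use Lemma \ref{lem:same-diff} together with the relation placed in $\R$ by $\psi(i)$ to derive a contradiction. The only cosmetic difference is that you spell out the convergence of $\psi(i)$ and the sign-preservation of positive powers in more detail, while the paper compresses these into the remark that exactly one of $\SAME(i)$, $\DIFFERENT(i)$ holds.
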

\begin{proof}
Let $i$ be an index for $(\F_i,\leq_i)$ a computable presentation of $\G$ with a computable left-ordering. Let $\leq$ be an ordering on $\G$ such that $(\G,\leq) \cong (\F_i,\leq_i)$. Now by Lemma \ref{lem:same-diff} either $(\G,\leq) \models \SAME(i)$ or $(\G,\leq) \models \DIFFERENT(i)$ (but not both). Suppose first that $(\G,\leq) \models \SAME(i)$. So $(\F_i,\leq_i) \models \SAME(i)$. By definition, $\psi(i) = 1$, say $\psi_{\at t}(i) = 1$. Then, in $\H/\R$, $p_i^t \alpha_i = - q_i^t \beta_i$. So $(\varepsilon,\alpha_i) > 1$ if and only if $(\varepsilon,\beta_i) < 1$, contradicting Lemma \ref{lem:same-diff} and the assumption that $(\G,\leq) \models \SAME(i)$. The case of $(\G,\leq) \models \DIFFERENT(i)$ is similar. Thus $\G$ has no computable copy with a computable left-ordering.
\end{proof}

All that remains to prove Theorem \ref{thm:main} is to define $\SAME(i)$ and $\DIFFERENT(i)$ and to prove Lemma \ref{lem:same-diff}.

\section{\texorpdfstring{$\SAME(i)$, $\DIFFERENT(i)$, and the Proof of Lemma \ref{lem:same-diff}}{The Proof of Lemma \ref{lem:same-diff}}}

To define $\SAME(i)$, we would like to come up with an existential formula which says that $(\varepsilon,\alpha_i) > 1 \Longleftrightarrow (\varepsilon,\beta_i) > 1$. A first attempt might be to try to find an existential formula defining $(\varepsilon,\alpha_i)$ and an existential formula defining $(\varepsilon,\beta_i)$. This cannot be done, but it will be helpful to think about how we might try to do this.

We will consider the problem of recognizing $\alpha_i$ and $\beta_i$ inside of $\H / \R$ by their actions on $\N$. Note that $\alpha_i$ has the property that $\varphi_{\alpha_i}(v_{i,0}) = v_{i,\alpha_i} \neq 0$, but $\varphi_{p_i \alpha_i}(v_{i,0}) = v_{i,0}$. So $\alpha_i$ acts with order $p_i$ on some element of $\N$. In fact, it is not hard to see that the only elements which act with order $p_i$ on an element of $\N$ are the multiples $n \alpha_i$ of $\alpha_i$ where $p_i \nmid n$. (Note that if $\alpha_i$ acts with order $p_i$ on a word in $\N$, then it either fixes or acts with order $p_i$ on each letter in that word, and it acts with order $p_i$ on at least one letter.)

One difficulty we have is that $\H / \R$ and $\N$ are not existentially definable inside of $\G$. The problem is that if some element of $\G$ satisfies a certain existential formula, then every conjugate of $\G$ does as well. So it is only possible to define subsets of $\G$ which are closed under conjugation. Given $S \subseteq \G$, let $S^\G$ be the set of all conjugates of $S$ by elements of $\G$.

In this section, we will take for granted the following lemma about existential definability in $\G$. It will be proved in the following section. The lemma says that we can find $\H/\R$ inside of $\G$, up to conjugation, by an existential formula.

\begin{lemma}\label{cor:H/R def}
$(\H/\R)^\G$ is $\exists$-definable within $\G$ without parameters.
\end{lemma}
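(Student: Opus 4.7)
My plan is first to obtain an explicit set-theoretic description of $(\H/\R)^\G$, and then to find witnesses that can be quantified existentially. A direct computation in the semidirect product shows that conjugating $(\varepsilon,g)\in\H/\R$ by $(n_0,g_0)\in\G$ produces an element of the form $(n_0\,\varphi_g(n_0)^{-1},\,g)$; the $g_0$-component cancels. Hence $(n,g)\in(\H/\R)^\G$ exactly when $n$ is a ``coboundary'' $n_0\,\varphi_g(n_0)^{-1}$ for the action of $\varphi_g$ on $\N$. The problem thus reduces to recognizing such coboundaries existentially inside $\G$.

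The naive approach, $\exists y\,\exists z\,(x=yzy^{-1}\wedge z\in\H/\R)$, is unsatisfactory because the clause $z\in\H/\R$ is not a priori an existential condition. I would instead search for an intrinsic existential criterion assembled from finitely many auxiliary witnesses. The key combinatorial fact available is that the letters $u_i$ of $\N$ are pointwise fixed by every $\varphi_g$, while each of the other generator types $v_{i,h},w_{i,h},x_{i,h},y_{i,h},z_{i,h}$ is permuted by $\varphi_g$ in a finite orbit whose size is controlled by the primes $p_i,q_i,r_i$ and the relations $\R_{i,t}$. This should pin down conjugates of elements of $\H/\R$ through their conjugation action on particular ``letter-like'' elements, producing a rigid system of word equations that any witness tuple must satisfy, and that in turn forces $x$ to have the coboundary form.

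The main obstacle will be to avoid universal quantification hidden inside this characterization. The natural candidate descriptions --- ``$z$ commutes with every element of some subgroup'', or ``$z$ lies in the kernel of the quotient $\G\to\H/\R$'' --- are universal, not existential. My plan for overcoming this is to exploit the explicit orbit structure of $\varphi$: by producing finitely many witness elements that already encode enough of the relevant orbit data (for instance, elements meant to play the role of conjugates of particular $u_i$ or particular $v_{i,h}$), one should be able to compress the apparently universal conditions into a finite conjunction of equations. I expect the final formula to take the shape ``there exist $y_1,\dots,y_k\in\G$ such that a quantifier-free conjunction of word equations in $x$ and the $y_j$ holds,'' with the equations chosen to force $x$ into the coboundary form while simultaneously certifying the witnesses as lying in conjugates of the appropriate subgroups of $\N$.
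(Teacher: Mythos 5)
Your plan identifies the right shape of the answer (a formula $\exists y_1,\dots,y_k\, \theta$ with $\theta$ quantifier-free), the right obstacle (``$z\in\H/\R$'' looks universal), and one genuinely relevant fact (the letters $u_i$ are fixed by every $\varphi_g$, which is exactly what the paper uses for the easy direction). But the proposal stops before the key idea, and the part of the argument that carries all the weight is missing.

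The paper's existential criterion is not a system of word equations that forces $x$ into coboundary form by tracking orbit data; it is a commutation criterion: $r\in(\H/\R)^\G$ if and only if there exist $s_1,s_2,s_3$ such that $r$ commutes with each $s_j$ but no two of $s_1,s_2,s_3$ commute with one another. The easy direction takes $s_j=(A,0)(u_{j-1},0)(A^{-1},0)$ when $r=(A,0)(\varepsilon,g)(A^{-1},0)$ --- this is where your $u_i$-observation enters, and notice that ``$s_1,s_2,s_3$ are pairwise non-commuting'' is already quantifier-free, so no further certification of the witnesses is needed; this neatly avoids the circularity your plan runs into when you propose to ``certify the witnesses as lying in conjugates of the appropriate subgroups.'' The hard direction --- that this commutation pattern actually forces $r\in(\H/\R)^\G$ --- rests on a detailed structural analysis of centralizers in $\G$: Lemma~\ref{lem:commute}, a semidirect-product analogue of the fact that in a free group commuting elements are powers of a common element; Lemma~\ref{lem:in H/R}, handling the case when two of the $s_j$ are outside $(\H/\R)^\G$; Lemma~\ref{lem:powers}, that square roots of elements of $\H/\R$ lie in $\H/\R$; and Lemma~\ref{lem:guarantee}, which assembles these to discharge the auxiliary hypothesis. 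Your sketch gestures at ``rigid systems of word equations'' but names no equations, does not observe that common centralizer membership relative to pairwise non-commuting witnesses is the right condition, and does not engage with the word combinatorics in $\N$ under the $\varphi$-action that occupy most of the section. As written there is no route from your outline to a verified formula without essentially rediscovering the centralizer analysis.
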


The different conjugates of $\H/\R$ cannot be distinguished from each other. Instead, we will try to always work inside a single conjugate of $\H/\R$. The following lemma tells us when we can do this.

\begin{lemma}\label{lem:H/R commute}
Suppose that $r,s \in (\H/\R)^\G$ and $r s \in (\H/\R)^\G$. Then there is $A \in \N$ and $g,h \in \H/\R$ such that
\[ r = (A,0)(\varepsilon,g)(A^{-1},0) \]
and
\[ s = (A,0)(\varepsilon,h)(A^{-1},0). \]
Thus $r$ and $s$ commute.
\end{lemma}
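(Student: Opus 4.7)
The plan is to translate the hypothesis on $rs$ into a single identity in $\N$ and then exploit that $\N$ is a free group whose generators are permuted by the action of $\H/\R$. First I would make the parameterization of $(\H/\R)^\G$ explicit: a direct calculation shows $(C,k)(\varepsilon,g)(C,k)^{-1} = (C\varphi_g(C^{-1}),g)$, so every element of $(\H/\R)^\G$ has the form $(C\varphi_g(C^{-1}),g)$ for some $C\in\N$. Writing $r=(C_1\varphi_g(C_1^{-1}),g)$ and $s=(C_2\varphi_h(C_2^{-1}),h)$, the assumption $rs\in(\H/\R)^\G$ produces some $D\in\N$ with
\[
C_1\varphi_g(C_1^{-1})\varphi_g(C_2)\varphi_{g+h}(C_2^{-1}) = D\varphi_{g+h}(D^{-1}).
\]
Setting $F=C_1^{-1}C_2$ and $S=\varphi_{-g}(C_1^{-1}D)$ and using $\varphi_{g+h}=\varphi_g\varphi_h=\varphi_h\varphi_g$, this rearranges into the cleaner identity
\[
F\varphi_h(F^{-1}) = S\varphi_{g+h}(S^{-1}).
\]

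Next I would observe that the lemma reduces to finding $F_1\in\mathrm{Fix}(\varphi_g)$ and $F_2\in\mathrm{Fix}(\varphi_h)$ with $F=F_1F_2^{-1}$: taking $A=C_1F_1=C_2F_2$, the fixedness of $F_1,F_2$ yields $A\varphi_g(A^{-1})=C_1\varphi_g(C_1^{-1})$ and $A\varphi_h(A^{-1})=C_2\varphi_h(C_2^{-1})$, so conjugating $(\varepsilon,g)$ and $(\varepsilon,h)$ by $(A,0)$ returns $r$ and $s$. The commutativity of $r$ and $s$ is then automatic since $(\varepsilon,g)$ and $(\varepsilon,h)$ already commute in the abelian group $\H/\R$.

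The heart of the argument is to extract this decomposition of $F$ from the displayed identity. Because $\varphi_g$ permutes the letters of $\N$, a reduced word is fixed by $\varphi_g$ iff each of its letters is, so $\mathrm{Fix}(\varphi_g)$ is the free subgroup on the $\varphi_g$-fixed letters (and similarly for $\mathrm{Fix}(\varphi_h)$ and $\mathrm{Fix}(\varphi_{g+h})$). Write $F=F'M$, where $M$ is the maximal suffix of reduced $F$ composed of $\varphi_h$-fixed letters. Then $F\varphi_h(F^{-1})$ collapses to $F'\varphi_h(F'^{-1})$, and by construction the last letter of $F'$ is not $\varphi_h$-fixed, so this expression is itself reduced. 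Treating $S$ analogously with a maximal $\varphi_{g+h}$-fixed suffix yields a reduced expression $S'\varphi_{g+h}(S'^{-1})$. Equating the two reduced words forces $|F'|=|S'|$; comparing first halves letter-by-letter gives $F'=S'$, and comparing second halves gives $\varphi_h(\ell)=\varphi_{g+h}(\ell)$ for every letter $\ell$ of $F'$. Since $\varphi_g$ and $\varphi_h$ commute, this last condition says $\varphi_h(\ell)$ is $\varphi_g$-fixed, hence so is $\ell$. Thus $F'\in\mathrm{Fix}(\varphi_g)$, and $F=F'M$ gives the desired decomposition with $F_1=F'$ and $F_2=M^{-1}$.

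The most delicate step, I expect, is this reduced-word analysis: one must verify that peeling off the maximal fixed suffix really produces a fully reduced form on both sides (no hidden cancellation across the midpoint) so that the identity becomes an honest letter-by-letter equality in the free group $\N$. Everything else is algebraic bookkeeping.
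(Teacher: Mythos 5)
Your argument is correct, and it takes a genuinely different route from the paper's. The paper also parameterizes $r=(A,0)(\varepsilon,g)(A^{-1},0)$, $s=(B,0)(\varepsilon,h)(B^{-1},0)$, $rs=(C,0)(\varepsilon,g+h)(C^{-1},0)$, but then runs a normalization-and-contradiction argument: it conjugates to make $A^{-1}B$ reduced, strips fixed suffixes so that $A\varphi_g(A^{-1})$, $B\varphi_h(B^{-1})$, $C\varphi_{g+h}(C^{-1})$ are all reduced, and from the word equality $A\varphi_g(A^{-1})\varphi_g(B)\varphi_{g+h}(B^{-1})=C\varphi_{g+h}(C^{-1})$ deduces that both $A$ and $B$ must be initial segments of $C$; since they have no common prefix, one of them is trivial, and the common conjugator falls out. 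Your proof instead isolates a single clean identity $F\varphi_h(F^{-1})=S\varphi_{g+h}(S^{-1})$ for $F=C_1^{-1}C_2$ and constructs the required decomposition $F=F_1F_2^{-1}$ with $F_1\in\mathrm{Fix}(\varphi_g)$, $F_2\in\mathrm{Fix}(\varphi_h)$ directly, which is arguably tidier --- there is no case split and no appeal to the invariance of the conclusion under conjugation (the paper needs to juggle two separate normalizations and quietly check they do not interfere). The one step you flagged as delicate --- that $F'\varphi_h(F'^{-1})$ and $S'\varphi_{g+h}(S'^{-1})$ are fully reduced after peeling the maximal fixed suffixes --- does go through: $\varphi_h$ permutes letters, so the only possible cancellation in $F'\varphi_h(F'^{-1})$ is between the last letter $f_k$ of $F'$ and $\varphi_h(f_k)^{-1}$, which is ruled out precisely because $f_k$ is not $\varphi_h$-fixed. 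The subsequent letter-by-letter comparison and the commutation argument $\varphi_h(f_i)=\varphi_{g+h}(f_i)\Rightarrow\varphi_g(f_i)=f_i$ are also sound. Both proofs are reduced-word arguments in the free group $\N$ exploiting that the $\H/\R$-action permutes generators; yours buys directness, the paper's is perhaps closer in spirit to the case analysis it reuses in Lemma \ref{lem:commute}.
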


The following remarks will be helpful not only here, but throughout the rest of the paper. They can all be checked by an easy computation.
\begin{remark}\label{rem:H/R form}
If $r \in (\H/\R)^\G$, then for some $A \in \N$ and $f \in \H/\R$ we can write $r$ in the form
\[ r = (A,0)(\varepsilon,f)(A^{-1},0). \]
\end{remark}

\begin{remark}\label{rem:H/R^G}
Let $r = (A,f)$ be an element of $(\H/\R)^\G$. If $K \subseteq \H/\R$, then $r \in K^\G$ if and only if $f \in K$.
\end{remark}

\begin{remark}\label{rem:move-through}
If $\varphi_g(B) = B$, then
\[ (AB,0)(\varepsilon,g)(AB,0)^{-1} = (A,0)(\varepsilon,g)(A,0)^{-1}.\]
\end{remark}

\begin{proof}[Proof of Lemma \ref{lem:H/R commute}]
Using Remark \ref{rem:H/R form}, let
\begin{align*}
r &= (A,0)(\varepsilon,g)(A^{-1},0) & s &= (B,0)(\varepsilon,h)(B^{-1},0) \\ r s &= (C,0)(\varepsilon,g+h)(C^{-1},0).
\end{align*}
By conjugating $r$ and $s$ by some further element of $\G$ (and noting that the conclusion of the lemma is invariant under conjugation), we may assume that $A^{-1} B$ is a reduced word, that is, that $A$ and $B$ have no common non-trivial initial segment. Using Remark \ref{rem:move-through}, we may assume that $A \varphi_g({A}^{-1})$, $B \varphi_h({B}^{-1})$, and $C \varphi_{g+h}(C^{-1})$ are reduced words. Indeed, if, for example, $A \varphi_g({A}^{-1})$ was not a reduced word, then we could write $A = A' B$ where $B$ is a word which is fixed by $\varphi_g$, and such that $A' \varphi_g({A'}^{-1})$ is a reduced word. Then, by Remark \ref{rem:move-through}, 
\[ (A,0)(\varepsilon,g)(A,0)^{-1} = (A'B,0)(\varepsilon,g)(A'B,0)^{-1} = (A',0)(\varepsilon,g)(A',0)^{-1}.\]
So we may replace $A$ by $A'$.

We have
\[ (A,0)(\varepsilon,g)(A^{-1},0)(B,0)(\varepsilon,h)(B^{-1},0) = (C,0)(\varepsilon,g+h)(C^{-1},0).\]
Multiplying out the first coordinates, we get
\[ A \varphi_g({A}^{-1}) \varphi_{g}({B}) \varphi_{g+h}({B}^{-1})= C \varphi_{g+h}(C^{-1}). \]
By the assumptions we made above, both sides are reduced words. $A$ is an initial segment of the left hand side, so it must be an initial segment of the right hand side, and hence an initial segment of $C$. On the other hand, taking inverses of both sides, we get
\[ \varphi_{g+h}({B}) \varphi_g(B^{-1}) \varphi_g({A}) A^{-1} = \varphi_{g+h}(C) C^{-1}. \]
Once again both sides are reduced words, and $\varphi_{g+h}(B)$ is an initial segment of the left hand side, and hence of $\varphi_{g+h}(C)$. But then $B$ is an initial segment of $C$. So it must be that $A$ is an initial segment of $B$ or vice versa. This contradicts one of our initial assumptions unless $A$ or $B$ (or both) is the trivial word. Suppose it was $A$ (the case of $B$ is similar). Then
\[ \varphi_{g}({B}) \varphi_{g+h}({B}^{-1})= C \varphi_{g+h}(C^{-1}) \]
and both sides are reduced words. Then we get that $C = B$ and $C = \varphi_g(B)$. So
\[ r = (\varepsilon,g) = (B,0)(\varepsilon,g)(B,0)^{-1} \]
by Remark \ref{rem:move-through}.
\end{proof}

Above, we noted that the set $\{ n \alpha_i : p_i \nmid n \}$ is the set of elements of $\H/\R$ which act with order $p_i$ on an element of $\N$. Our next goal is to show that if we close under conjugation, then this set (and a few other similar sets) are definable. The key is the following remark which follows easily from Lemma \ref{lem:H/R commute}.

\begin{remark}\label{lem:fix}
Fix $r,s_1,s_2 \in (\H/\R)^\G$. Suppose that $r s_1 \in (\H/\R)^\G$ and $r s_2 \in (\H/\R)^\G$ but $s_1$ and $s_2$ do not commute. By Lemma \ref{lem:H/R commute} we can write
\begin{align*}
r &= (A,0)(\varepsilon,f)(A^{-1},0) = (B,0)(\varepsilon,f)(B^{-1},0) \\
s_1 &= (A,0)(\varepsilon,g)(A^{-1},0) \\
s_2 &= (B,0)(\varepsilon,h)(B^{-1},0).
\end{align*}
Then there is some element of $\N$ which is fixed by $\varphi_f$ but which is not fixed by $\varphi_g$. 

Indeed, since $(A,0)(\varepsilon,f)(A^{-1},0) = (B,0)(\varepsilon,f)(B^{-1},0)$, we see that
\[ B^{-1} A = \varphi_f(B^{-1} A). \]
Suppose for the sake of contradiction that $\varphi_g$ also fixes $B^{-1} A$.  Then
\[ s_1 = (A,0)(A^{-1}B,0)(\varepsilon,g)(B^{-1}A,0)(A^{-1},0) = (B,0)(\varepsilon,g)(B^{-1},0). \]
So $s_1$ and $s_2$ would commute. This is a contradiction. So there is some element of $\N$ which is fixed by $\varphi_f$ but which is not fixed by $\varphi_g$.
\end{remark}

\begin{lemma}
There are $\exists$-formulas which express each of the following statements about an element $a$ in $\G$:
\begin{enumerate}
	\item $a \in \{ n \alpha_i : p_i \nmid n \}^\G$.
	\item $a \in \{ n \beta_i : q_i \nmid n \}^\G$.
	\item $a \in \{ n \gamma_i : r_i \nmid n \}^\G$.
	\item $a \in \{ n (\alpha_i - \gamma_i) : p_i,r_i \nmid n \}^\G$.
	\item $a \in \{ n (\beta_i - \gamma_i) : q_i,r_i \nmid n \}^\G$.
\end{enumerate}
\end{lemma}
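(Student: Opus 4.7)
The plan is to exploit the fact that the action of $\H/\R$ on $\N$ is visible inside $\G$: if $a = \sigma(\varepsilon,g)\sigma^{-1} \in (\H/\R)^\G$, then conjugation by $a$ sends $\sigma(B,0)\sigma^{-1}$ to $\sigma(\varphi_g(B),0)\sigma^{-1}$, so the automorphism $\varphi_g$ is effectively readable from $\G$-conjugation. Lemma \ref{cor:H/R def} makes $(\H/\R)^\G$ existentially definable, which is the starting point.

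For cases (1)--(3), I would use the observation made in the paragraph just before the lemma: $\{n\alpha_i : p_i \nmid n\}$ is exactly the set of $g \in \H/\R$ such that $\varphi_g$ has order $p_i$ on some letter of $\N$ (and similarly for $\beta_i,\gamma_i$ with $q_i,r_i$). The formula for case (1) is
\[
a \in (\H/\R)^\G \;\land\; \exists b\,\bigl(aba^{-1}\neq b \;\land\; a^{p_i}ba^{-p_i}=b\bigr),
\]
with $q_i$ or $r_i$ replacing $p_i$ for cases (2), (3). Forward: take $b$ to be the appropriate conjugate of $(v_{i,0},0)$, $(w_{i,0},0)$, or $(x_{i,0},0)$. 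Converse: after conjugating so that $a = (\varepsilon,g)$, writing $b = (B,h)$ gives $aba^{-1} = (\varphi_g(B),h)$ and $a^{p_i}ba^{-p_i} = (\varphi_g^{p_i}(B),h)$, so the two clauses assert that $\varphi_g$ has order exactly $p_i$ on the word $B$. Since $\varphi_g$ permutes letters and $p_i$ is prime, this forces some individual letter of $B$ to have order $p_i$ under $\varphi_g$, placing $g$ in the required set.

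For cases (4), (5), the elements $\alpha_i - \gamma_i$ and $\beta_i - \gamma_i$ have only trivial or infinite-order action on each letter of $\N$, so the direct finite-order trick is unavailable. Instead I would use the (already-defined) formulas for cases (1) and (3) as subformulas, together with a witness element $d$ that will play the role of $(y_{i,0},0)$. The proposed formula for case (4) is
\[
a \in (\H/\R)^\G \;\land\; \exists b,c,d\,\bigl(b\in\text{set (1)} \;\land\; c\in\text{set (3)} \;\land\; a=bc^{-1} \;\land\; bdb^{-1}=cdc^{-1} \;\land\; bdb^{-1}\neq d\bigr).
\]
Given witnesses, the condition $a = bc^{-1} \in (\H/\R)^\G$ triggers Lemma \ref{lem:H/R commute} (applied to $b$ and $c^{-1}$), so $a,b,c$ lie in a common conjugate of $\H/\R$; after further conjugation one may assume $b=(\varepsilon,k\alpha_i)$, $c=(\varepsilon,m\gamma_i)$, $a=(\varepsilon,k\alpha_i-m\gamma_i)$ with $p_i \nmid k$ and $r_i \nmid m$. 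Writing $d=(D,h)$, the last two clauses become $\varphi_{k\alpha_i}(D) = \varphi_{m\gamma_i}(D) \neq D$. A letter-by-letter check using the defining relations of each quotient $\V_j,\W_j,\X_j,\Y_j,\Z_j$ shows that (apart from the trivially-fixed $u_j$ letters) the equation $\varphi_{k\alpha_i}(\ell)=\varphi_{m\gamma_i}(\ell)$ can only hold when $\ell$ is a $y_{i,h}$-letter, and that case reduces to $k=m$ via the identification $\alpha_i=\gamma_i$ in $\H/\Y_i$ together with the infinite order of $\alpha_i$ there. The $\neq D$ clause forces at least one such letter to appear in $D$, so $k=m$ and thus $a=k(\alpha_i-\gamma_i)$ with $p_i,r_i\nmid k$. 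The forward direction is the obvious verification, with $b,c,d$ taken as the appropriate conjugates of $(\varepsilon,n\alpha_i)$, $(\varepsilon,n\gamma_i)$, $(y_{i,0},0)$. Case (5) is entirely parallel, using $z_{i,0}$ and the relation $\beta_i=\gamma_i$ in $\H/\Z_i$.

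The main obstacle is cases (4), (5): although $\alpha_i-\gamma_i$ has no finite-order action on any individual letter, the witness $d$ can function as an equalizer for the actions $\varphi_b$ and $\varphi_c$ of elements coming from sets (1) and (3), and the rigidity of the various quotients $\H/\V_j,\ldots,\H/\Z_j$ is what ensures that agreement of $\varphi_b$ and $\varphi_c$ on a non-$u_j$ letter forces both the letter type ($y_{i,h}$) and the coefficient equality $k=m$.
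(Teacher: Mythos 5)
Your approach mirrors the paper's in both structure and substance. For (1)--(3), both detect that $a$, but not $a^{p_i}$, moves some witness: the paper does this by requiring $b\in(\H/\R)^\G$, $a^{p_i}b\in(\H/\R)^\G$, and $ab\neq ba$, and then invoking Remark~\ref{lem:fix}; you phrase it directly as $aba^{-1}\neq b$ and $a^{p_i}ba^{-p_i}=b$, which encodes the same orbit information and is if anything a little cleaner since $b$ need not lie in $(\H/\R)^\G$. For (4)--(5), the paper's auxiliary $c\in(\H/\R)^\G$ with $ac\in(\H/\R)^\G$ and $[c,b_1]\neq 1$ plays the same role as your equalizer $d$: each manufactures a word of $\N$ fixed by $\varphi_{k\alpha_i-m\gamma_i}$ but not by $\varphi_{k\alpha_i}$, after which the case analysis over the quotients $\H/\V_j,\dots,\H/\Z_j$ isolates $\H/\Y_i$ and yields $k=m$. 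So this is essentially the paper's proof with cosmetically different formulas.

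There is, however, one step that both you and the paper pass over too quickly, namely the claim that the witness letter must lie in the $v_i$-family in case (1). Suppose $\psi(i)$ halts at stage $t\geq 2$. Then in $\H/\W_i$ the image of $\alpha_i$ has order $p_i^{t}$ (the torsion is $\mathbb{Z}/p_i^{t}\oplus\mathbb{Z}/q_i$), so $g=p_i^{t-1}\alpha_i$ acts with order exactly $p_i$ on the letter $w_{i,0}$, even though $p_i\mid p_i^{t-1}$ and hence $g\notin\{n\alpha_i : p_i\nmid n\}$. Taking $b=(w_{i,0},0)$ shows that $(\varepsilon,p_i^{t-1}\alpha_i)$ satisfies your formula (and an analogous conjugate of $(w_{i,0},0)$ witnesses the paper's). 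The paper's parenthetical remark that this ``cannot happen in $\H/\W_j$'' is not justified and appears to be false for $j=i$ when $t\geq 2$; your ``placing $g$ in the required set'' is the same unjustified jump. One would need either to strengthen the formula to rule out the $w_i$-witnesses, or to check that the slightly larger set the formula actually defines still suffices for the use made of it in Lemma~\ref{lem:almost-done}.
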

\begin{proof}
For (1), we claim that $a \in \{ n \alpha_i : p_i \nmid n \}^\G$ if and only if $a \in (\H/\R)^\G$ and there is $b \in (\H/\R)^\G$ such that $a^{p_i} b \in (\H/\R)^\G$ but $a$ and $b$ do not commute. This is expressed by an $\exists$-formula by Lemma \ref{cor:H/R def}.

Suppose that $a$ satisfies this $\exists$-formula, as witnessed by $b$. Let $a = (A,f)$ and $b = (B,g)$. Then by Remark \ref{lem:fix} (taking $r = a^{p_i}$, $s_1 = a$, and $s_2 = b$), there is an element of $\N$ which is fixed by $\varphi_{p_i f}$ but not by $\varphi_{f}$. Thus we see that $p_i \bar{f} = 0$ but $\bar{f} \neq 0$ in $\H / \V_i$, and $f = n \alpha_i$ for some $n$ with $p_i \nmid n$. (It must be in $\H / \V_i$, because this cannot happen in any of $\H / \V_j$ for $j \neq i$, or $\H / \W_j$, $\H / \X_j$, $\H / \Y_j$, or $\H / \Z_j$.) Thus by Remark \ref{rem:H/R^G}, $a \in \{ n \alpha_i : p_i \nmid n \}^\G$.

On the other hand, suppose that $a \in \{ n \alpha_i : p_i \nmid n \}^\G$. Write
\[ a = (A,0)(\varepsilon,n \alpha_i)(A^{-1},0).\]
with $p_i$ not dividing $n$. Then let $b = (A v_{i,0},0)(\varepsilon,n \alpha_i)((A v_{i,0})^{-1},0)$. By Remark \ref{rem:move-through}, since $\varphi_{n p_i \alpha_i}(v_{i,0}) = v_{i,0}$,  we have
\[ a^{p_i} = (A,0)(\varepsilon,n p_i \alpha_i)(A^{-1},0) = (A v_{i,0},0)(\varepsilon,n p_i \alpha_i)((A v_{i,0})^{-1},0).\]
So $a^{p_i} b \in (\H/\R)^\G$. On the other hand,
\[ a b = (A \varphi_{n \alpha_i}(v_{i,0}) \varphi_{2 n \alpha_i}(v_{i,0})^{-1} \varphi_{2 n \alpha_i}(A^{-1}),2 n \alpha_i) \]
and
\[ b a = (A v_{i,0} \varphi_{n \alpha_i}(v_{i,0})^{-1} \varphi_{2 n \alpha_i}(A^{-1}),2n\alpha_i). \]
So $a$ does not commute with $b$ since $\varphi_{n \alpha_i}(v_{i,0}) = v_{i,n\alpha_i} \neq v_{i,0}$.
The proofs of (2) and (3) are similar.

For (4), we claim that $a \in \{ n (\alpha_i - \gamma_i) : p_i,r_i \nmid n \}^\G$ if and only if there are $b_1 \in \{ n \alpha_i : p_i \nmid n \}^\G$, $b_2 \in \{ n \gamma_i : r_i \nmid n \}^\G$, and $c \in (\H/\R)^\G$ such that $a = b_1 b_2^{-1}$, $a c, a b_1 \in (\H/\R)^\G$, and $c$ does not commute with $b_1$.

Suppose that there are such $b_1$, $b_2$, and $c$. We can write $b_1 = (B_1,m \alpha_i)$ with $p_i \nmid m$ and $b_2 = (B_2,n \gamma_i)$ with $r_i \nmid \gamma_i$. Thus we can write $a = b_1 b_2^{-1} = (A,m \alpha_i - n \gamma_i)$. By Remark \ref{lem:fix} (with $r = a$, $s_1 = b_1$, and $s_2 = c$), $\varphi_{m \alpha_i - n \gamma_i}$ fixes some element of $\N$ which is not fixed by $\varphi_{m \alpha_i}$. Thus, in one of $\H / \V_j$, $\H / \W_j$, $\H / \X_j$, $\H / \Y_j$, or $\H / \Z_j$ for some $j$ we have $m \bar{\alpha}_i - n \bar{\gamma}_i = 0$ but $m \bar{\alpha}_i \neq 0$. Since $p_i \nmid m$,  it must be in $\H / \Y_i$. So $n = m$. Note that $p_i$ and $r_i$ do not divide $n$.

On the other hand, suppose that $a \in \{ n (\alpha_i - \gamma_i) : p_i,r_i \nmid n \}^\G$. Then write
\[ a = (A,0)(\varepsilon,n \alpha_i - n \gamma_i)(A^{-1},0).\]
with $p_i$ and $r_i$ not dividing $n$. Let
\[ b_1 = (A,0)(\varepsilon,n \alpha_i)(A^{-1},0) \text{ and } b_2 = (A,0)(\varepsilon,n \gamma_i)(A^{-1},0)\]
and let
\[ c = (A y_{i,0},0)(\varepsilon, n \alpha_i)((A y_{i,0})^{-1},0).\]
Then $a = b_1 b_2^{-1}$. Clearly $a b_1 \in (\H/\R)^\G$. Also, since $\varphi_{n \alpha_i - n \gamma_i}(y_{i,0}) = y_{i,0}$, 
\[ ac = ca = (A y_{i,0},0)(\varepsilon, 2n \alpha_i - n \gamma_i)((A y_{i,0})^{-1},0). \]
So $ac \in (\H/\R)^\G$ and $a$ and $c$ commute.
On the other hand, $b_1$ does not commute with $c$ since $\varphi_{\ell \alpha_i}(y_{i,0}) = y_{i,\ell \alpha_i} \neq y_{i,0}$ as $p_i$ does not divide $\ell$.
\end{proof}

We will now define $\SAME(i)$ and $\DIFFERENT(i)$.

\begin{definition}\label{def:same}
$\SAME(i)$ says that there are $a$, $b$, and $c$ such that:
\begin{enumerate}
	\item $a$, $b$, $c$, and $ab$ are in $(\H/\R)^\G$,
	\item $a > 1 \Longleftrightarrow b > 1$,
	\item $a \in \{ n \alpha_i : p_i \nmid n \}^\G$,
	\item $b \in \{ n \beta_i : q_i \nmid n \}^\G$,
	\item $c \in \{ n \gamma_i : r_i \nmid n \}^\G$,
	\item $a c^{-1} \in \{ n (\alpha_i - \gamma_i) : p_i,r_i \nmid n \}^\G$.
	\item $b c^{-1} \in \{ n (\beta_i - \gamma_i) : q_i,r_i \nmid n \}^\G$.
\end{enumerate}
$\DIFFERENT(i)$ is defined in the same way as $\SAME(i)$, except that in (2) we ask that $a > 1$ if and only if $b < 1$.
\end{definition}

Suppose, for simplicity, that $a$, $b$, and $c$ are all in $\H / \R$. Then we would have that $a = (\varepsilon,\ell \alpha_i)$, $b = (\varepsilon,m \beta_i)$, and $c = (\varepsilon,n \gamma_i)$. Now $a c^{-1} = (\varepsilon,\ell \alpha_i - n \gamma_i)$ is a power of $(\varepsilon,\alpha_i - \gamma_i)$, and so $\ell = n$. Similarly, $b c^{-1} = (\varepsilon,m \beta_i - n\gamma_i)$ is a power of $(\varepsilon,\beta_i - \gamma_i)$, and so $m = n$. Thus $\ell = m$. Since $(\varepsilon,\ell \alpha_i) > 1 \Longleftrightarrow (\varepsilon,\ell \beta_i) > 1$, $(\varepsilon,\alpha_i) > 1 \Longleftrightarrow (\varepsilon,\beta_i) > 1$. Checking that this works even if $a$, $b$, and $c$ are conjugates of $\H / \R$ is the heart of Lemma \ref{lem:almost-done}.

\begin{lemma}\label{lem:almost-done}
Let $\leq$ be a left-ordering on $\G$. Then:
\begin{enumerate}
	\item If $(\varepsilon,\alpha_i) > 1 \Longleftrightarrow (\varepsilon,\beta_i) > 1$, then $(\G,\leq) \models \SAME(i)$.
	\item If $(\varepsilon,\alpha_i) > 1 \Longleftrightarrow (\varepsilon,\beta_i) < 1$, then $(\G,\leq) \models \DIFFERENT(i)$.
	\item If $\psi(i) \downarrow$, then $(\varepsilon,\alpha_i) > 1 \Longleftrightarrow (\varepsilon,\beta_i) > 1$ if and only if $(\G,\leq) \models \SAME(i)$.
	\item If $\psi(i) \downarrow$, then $(\varepsilon,\alpha_i) > 1 \Longleftrightarrow (\varepsilon,\beta_i) < 1$ if and only if $(\G,\leq) \models \DIFFERENT(i)$.
\end{enumerate}
\end{lemma}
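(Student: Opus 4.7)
For (1) and (2) I will use direct witnesses: take $a = (\varepsilon, \alpha_i)$, $b = (\varepsilon, \beta_i)$, and $c = (\varepsilon, \gamma_i)$. All of $a, b, c, ab, ac^{-1}, bc^{-1}$ already lie in the subgroup $\H/\R \subseteq \G$, so clauses (1), (6), (7) of $\SAME(i)$ are immediate; clauses (3)--(5) hold with coefficient $1$, for which the coprimality requirements are vacuous; and clause (2) is exactly the hypothesis. Part (2) uses the same witnesses with the $\DIFFERENT(i)$ version of clause (2).

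The substantive work is the converse direction of (3); the forward direction is part (1). Suppose $(\G,\leq) \models \SAME(i)$ with witnesses $a, b, c$. First I would read off second coordinates: by Remark \ref{rem:H/R^G}, the second coordinates of $a, b, c, ac^{-1}, bc^{-1}$ are $\ell \alpha_i$, $m \beta_i$, $n \gamma_i$, $k(\alpha_i - \gamma_i)$, $j(\beta_i - \gamma_i)$ respectively, with the corresponding coprimality conditions ($p_i \nmid \ell$, and so on). Since second coordinates add under multiplication in the semidirect product, the identity $(ac^{-1}) \cdot c = a$ gives $k(\alpha_i - \gamma_i) + n \gamma_i = \ell \alpha_i$ in $\H/\R$. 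The decomposition $\H/\R = \bigoplus_i \langle \alpha_i, \beta_i \rangle / \R_i \oplus \bigoplus_i \langle \gamma_i \rangle$, together with the fact that $\alpha_i$ has infinite order in the torsion-free group $\langle \alpha_i, \beta_i \rangle / \R_i$, forces $\ell = k = n$; the analogous computation for $b c^{-1}$ forces $m = j = n$. Hence $\ell = m \neq 0$.

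Next, Lemma \ref{lem:H/R commute} applied to $a, b, ab \in (\H/\R)^\G$ yields $P = (A, 0) \in \G$ such that $a = P(\varepsilon, \ell \alpha_i) P^{-1}$ and $b = P(\varepsilon, \ell \beta_i) P^{-1}$. I then define $\preceq_P$ on $\H/\R$ by $f \preceq_P g \Longleftrightarrow P(\varepsilon, f) P^{-1} \leq P(\varepsilon, g) P^{-1}$; this is a left-order on $\H/\R$, and hence a bi-order since $\H/\R$ is abelian. Clause (2) of $\SAME(i)$ then reads $\ell \alpha_i \succ_P 0 \Leftrightarrow \ell \beta_i \succ_P 0$, which, since $\preceq_P$ is a bi-order on a torsion-free abelian group with $\ell \neq 0$, simplifies to $\alpha_i \succ_P 0 \Leftrightarrow \beta_i \succ_P 0$. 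Now invoke $\psi(i) \downarrow$: if $\psi_{\at t}(i) = 1$ then $p_i^t \alpha_i = -q_i^t \beta_i$ in $\H/\R$, which forces $\alpha_i \succ_P 0 \Leftrightarrow \beta_i \prec_P 0$ in every bi-order on $\H/\R$, a contradiction. So $\psi_{\at t}(i) = 0$ and $p_i^t \alpha_i = q_i^t \beta_i$ in $\H/\R$; the same reasoning, applied to the order $\preceq$ on $\H/\R$ induced by $\leq$, gives $(\varepsilon, \alpha_i) > 1 \Leftrightarrow (\varepsilon, \beta_i) > 1$, the left-hand side of (3). Part (4) is symmetric: the $\DIFFERENT(i)$ version of clause (2) gives $\alpha_i \succ_P 0 \Leftrightarrow \beta_i \prec_P 0$, which rules out $\psi(i) = 0$ and forces $\psi(i) = 1$.

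The main obstacle is bridging $a > 1 \Leftrightarrow b > 1$ and the analogous statement for $(\varepsilon, \alpha_i)$ and $(\varepsilon, \beta_i)$: in a left-order on $\G$, conjugate elements need not have the same sign, so one cannot directly say $a > 1 \Leftrightarrow (\varepsilon, \ell \alpha_i) > 1$. The remedy is the conjugated order $\preceq_P$, which is automatically a bi-order on the abelian group $\H/\R$, so the relations in $\R$ constrain sign comparisons in $\preceq_P$ in exactly the same way as they do in the induced order $\preceq$, and that is what lets the conclusion transfer.
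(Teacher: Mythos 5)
Your proof is correct and follows essentially the same route as the paper: the same direct witnesses for (1)--(2), the same coefficient-matching argument via $ac^{-1}$ and $bc^{-1}$ to force $\ell = m$, and the same appeal to Lemma \ref{lem:H/R commute} to place $a$ and $b$ in a common conjugate of $\H/\R$. The paper finishes slightly more tersely by noting that $\psi_{\at t}(i)=1$ would give $a^{p_i^t}=b^{-q_i^t}$, contradicting $a>1\Leftrightarrow b>1$ directly in $(\G,\leq)$; your device of the conjugated bi-order $\preceq_P$ on $\H/\R$ makes the same point explicit and is a legitimate (arguably clearer) way to say it.
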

\begin{proof}
First, for (1), suppose that $(\varepsilon,\alpha_i) > 1 \Longleftrightarrow (\varepsilon,\beta_i) > 1$. Then $(\G,\leq) \models \SAME(i)$ as witnessed by $c = (\varepsilon,\alpha_i)$, $c = (\varepsilon,\beta_i)$, and $c = (\varepsilon,\gamma_i)$. (2) is similar.

Now for (3), suppose that $(\G,\leq) \models \SAME(i)$ as witnessed by $a$, $b$, and $c$, and that $\psi(i) \downarrow$. Let $f$, $g$, and $h$ be the second coordinates of $a$, $b$, and $c$ respectively. Write $f = \ell \alpha_i$ with $p_i \nmid \ell$, $g = m \beta_i$ with $q_i \nmid m$, and $h = n \gamma_i$ with $r_i \nmid h$. Then since $f - h$ is a multiple of $\alpha_i - \gamma_i$, $\ell = n$. Similarly, $m = n$, and so $\ell = m$.

Since $ab \in (\H/\R)^\G$ and $a$ and $b$ commute, by Lemma \ref{lem:H/R commute} we can write
\[ a = (B,0) (\varepsilon,\ell \alpha_i) (B,0)^{-1} \]
and
\[ b = (B,0) (\varepsilon,\ell \beta_i) (B,0)^{-1}. \]
Now since $\psi(i) \downarrow$, in $\H / \R$ either $p_i^t \alpha_i = q_i^t \beta_i$ or $p_i^t \alpha_i = - q_i^t \beta_i$ for some $t$. In the second case, $a^{p_i^t} = b^{- q_i^t}$ which contradicts the fact that $a > 1 \Longleftrightarrow b > 1$. Thus $p_i^t \alpha_i = q_i^t \beta_i$, and so $(\varepsilon,\alpha_i) > 1 \Longleftrightarrow (\varepsilon,\beta_i) > 1$.

(4) is proved similarly.
\end{proof}

\begin{proof}[Proof of Lemma \ref{lem:same-diff}]
We will prove (1):  $(\G,\leq) \models \SAME(i)$ if and only if $(\varepsilon,\alpha_i) > 1 \Longleftrightarrow (\varepsilon,\beta_i) > 1$. The proof of (2) is similar. The right to left direction follows immediately from (1) of Lemma \ref{lem:almost-done}. For the left to right direction, suppose that $(\F_i,\leq_i) \models \SAME(i)$. Then $\psi(i) \downarrow$. Then the lemma follows from (3) of Lemma \ref{lem:almost-done}.
\end{proof}

\section{\texorpdfstring{An Existential Definition of $(\H/\R)^\G$}{An Existential Definition of H/R}}

The goal of this section is to prove Lemma \ref{cor:H/R def}, which says that $(\H/\R)^\G$ is definable within $\G$ by an existential formula. To prove this lemma, we will first have to give a detailed analysis of which elements of $\G$ commute with each other.

The first lemma is the analogue of the following well-known fact about free groups: two elements $a$ and $b$ in a free group commute if and only if there is $c$ such that $a = c^m$ and $b = c^n$ (see \cite[Proposition 2.17]{LyndonSchupp}).

\begin{lemma}\label{lem:commute}
Let $r,s \in \G$ commute. Then there are $W,V \in \N$, $x,y,z \in \H/\R$, and $k,\ell \in \mathbb{Z}$ such that
\[ r = (W,0)(V,x)^k(\varepsilon,y)(W,0)^{-1} \]
and
\[ s = (W,0)(V,x)^\ell(\varepsilon,z)(W,0)^{-1}.\]
If $k \neq 0$ then $\varphi_{z}(V) = V$, and if $\ell \neq 0$ then $\varphi_{y}(V) = V$.
\end{lemma}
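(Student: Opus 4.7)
Write $r = (A, f)$ and $s = (B, g)$ with $A, B \in \N$ and $f, g \in \H/\R$. Since $\H/\R$ is abelian, the commutation $rs = sr$ reduces to the single word identity $A \varphi_f(B) = B \varphi_g(A)$ in the free group $\N$. My plan is to normalize the pair $(r, s)$ by a simultaneous conjugation by an element $(W, 0)$ and then to extract a common ``cyclic core'' $(V, x) \in \G$ from the normalized pair, with the abelian remainders $(\varepsilon, y)$ and $(\varepsilon, z)$ accounting for the difference between $f, g$ and integer multiples of $x$.

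Conjugation by $(W, 0)$ acts as $(A, f) \mapsto (W^{-1} A \varphi_f(W), f)$, and analogously on $s$. Mirroring the normalization in the proof of Lemma~\ref{lem:H/R commute}, I would choose $W$ so that the relevant twisted conjugates are reduced words with no hidden cancellations. This is possible because $\varphi_f$ and $\varphi_g$ merely permute the free generators of $\N$, so they preserve word length and never introduce cancellation on their own.

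When $f = g = 0$, the word identity becomes $AB = BA$ in $\N$; the classical fact that commuting pairs in a free group are simultaneously powers of a common element gives $V \in \N$ and $k, \ell \in \mathbb{Z}$ with $A = V^k$ and $B = V^\ell$, and the lemma follows with $W = \varepsilon$ and $x = y = z = 0$. When at least one of $f, g$ is nonzero, the normalization together with the word equation should force a decomposition
\[ A = W \cdot V \varphi_x(V) \cdots \varphi_{(k-1)x}(V) \cdot \varphi_{kx+y}(W^{-1}), \qquad f = kx + y, \]
and a parallel decomposition of $B$ using the same $W, V, x$ but with $\ell, z$ in place of $k, y$. The conditions $\varphi_y(V) = V$ (if $\ell \neq 0$) and $\varphi_z(V) = V$ (if $k \neq 0$) then emerge as exactly the requirement that the abelian remainders commute with the cyclic factor $(V, x)$ in the nontrivial cases.

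The main obstacle is the extraction of this common periodic block $V$ and shift $x$ from the word equation when both $f$ and $g$ are nonzero. The argument will hinge on reading $A \varphi_f(B) = B \varphi_g(A)$ as a statement about aligned reduced words: after normalization, initial and terminal segments of the two sides must match letter by letter, and iterating this matching should force both $A$ and $B$ to be concatenations of shifted copies of a single periodic word, namely $V \varphi_x(V) \varphi_{2x}(V) \cdots$. I expect this to be a more elaborate version of the initial-segment-matching argument used in the proof of Lemma~\ref{lem:H/R commute}, where the extra generality (elements of $\N$ with arbitrary second coordinate, rather than pure conjugates of $\H/\R$) forces us to keep track of how $V$, $W$, and the shift $x$ interact with the action $\varphi$.
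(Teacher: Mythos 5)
Your setup is right: writing $r=(A,f)$, $s=(B,g)$ and reducing commutation to the single word identity $A\varphi_f(B)=B\varphi_g(A)$ in $\N$ is exactly the paper's starting point, as is the target decomposition and the observation that $f=g=0$ reduces to the classical free-group fact. But the plan for the general case has a genuine gap, and it sits precisely at the step you flag as ``the main obstacle.''

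The claim that one can choose a conjugating word $W$ so that ``the relevant twisted conjugates are reduced words with no hidden cancellations,'' justified only by noting that the $\varphi$'s permute letters, does not hold. Conjugation by $(W,0)$ sends $(A,f)$ to $(W^{-1}A\varphi_f(W),f)$ and the product on the left side of the word identity becomes $W^{-1}\,A\varphi_f(B)\,\varphi_{f+g}(W)$: the cancellation between the tail of $A$ and the head of $\varphi_f(B)$ is interior to this expression and is not touched by the conjugation, so no single choice of $W$ normalizes it away. The paper instead runs an induction on $|A|+|B|$: when $A\varphi_g(B)$ and $B^{-1}A$ both cancel, one conjugates by the first letter $a_0$ of $A$ and shows $|A|$ strictly drops, iterating until reaching one of several base configurations, which are then handled separately. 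Your proposal does not give this induction, does not explain why it terminates, and does not treat the low-length base cases ($A$ or $B$ trivial, or of length one) that the induction bottoms out in. Nor does it distinguish the two genuinely different ``aligned'' configurations (one where $A\varphi_g(B)$ is reduced, one where $B^{-1}A$ is reduced) that the paper treats as separate cases. Finally, even in the cancellation-free case, the extraction of the period $V$ and shift $x$ is not a routine initial-segment matching: the paper sets $d=\gcd(m,n)$ where $m=|A|$, $n=|B|$, writes $d=qn-pm$, and proves by a lexicographic induction on $(q,p)$ that $a_i=\varphi_{qh-pg}(a_r)$, which is the Bezout-style argument that actually produces $V$ and $x$. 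Sketching that ``iterating the matching should force periodicity'' names the goal but does not provide this argument.

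In short: right scaffolding, but the normalization step is asserted via a false reason and the entire case analysis and the GCD extraction—which constitute essentially all of the paper's proof—are missing.
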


It is easy to check that two such elements commute.

\begin{proof}
Suppose that $r s = s r$. Let $r = (A,g)$ and $s = (B,h)$. Then we find that
\begin{align*}
r s & = (A,g)(B,h) \\
& = (A \varphi_g(B),g + h) \\
s r &= (B,h) (A,g) \\
& = (B \varphi_h(A),g + h).
\end{align*}
So $A \varphi_g(B) = B \varphi_h(A)$ in $\N$. Write
\[ A = a_0 \cdots a_{m-1} \text{ and } B = b_0 \cdots b_{n-1} \]
as reduced words. So
\[ a_0 \cdots a_{m-1} \varphi_g(b_0) \cdots \varphi_g(b_{n-1}) = b_0 \cdots b_{n-1} \varphi_h(a_0) \cdots \varphi_h(a_{m-1}). \]
We divide into several cases.

\begin{case}
$A$ is the trivial word.
\end{case}

We must have $B = \varphi_g(B)$. Then $r = (\varepsilon,g)$ and $s = (B,h)$. Take $W = \varepsilon$, $V = B$, $x = h$, $y = g$, $z = 0$, $k = 0$, and $\ell = 1$. 

\begin{case}
$B$ is the trivial word.
\end{case}

We must have $A = \varphi_h(A)$.  Then $r = (A,g)$ and $s = (\varepsilon,h)$. Take $W = \varepsilon$, $V = A$, $x = g$, $y = 0$, $z = h$, $k = 1$, and $\ell = 0$.

\begin{case}
Neither $A$ nor $B$ is the trivial word, and both $A \varphi_g(B)$ and $B \varphi_h(A)$ are reduced words.
\end{case}

We have $A \varphi_g(B) = B \varphi_h(A)$ as reduced words. Assume without loss of generality that $|A| = m \geq n = |B|$. Then $n,m > 0$ and 
\[ a_0 \cdots a_{m-1} \varphi_g(b_0) \cdots \varphi_g(b_{n-1}) = b_0 \cdots b_{n-1} \varphi_h(a_0) \cdots \varphi_h(a_{m-1}) \]
as reduced words. So
\begin{align*}
a_i &= b_i &&\text{for $0 \leq i < n$} \\
a_{i} &= \varphi_h(a_{i-n}) &&\text{for $n \leq i < m$}\\
\varphi_g(b_i) &= \varphi_h(a_{m - n + i}) &&\text{for $0 \leq i < n$}.
\end{align*}
Let $d = \gcd(m,n)$. (This is where we use the fact that $m,n > 0$.) Let $n' = n/d$ and $m' = m/d$.

Given $p,q \geq 0$, write $i = q n - p m + r$ with $0 \leq r < d$ and assume that $0 \leq i < m$. Note that every $i$, $0 \leq i < m$, can be written in such a way. We claim that
\[ a_i = \varphi_{qh - pg}(a_{r}). \]
We argue by induction, ordering pairs $(q,p)$ lexicographically. For the base case $p = q = 0$ we note that $a_r = \varphi_0(a_r)$. Otherwise, if $n \leq i < m$, then we must have $q > 0$. By the induction hypothesis, $a_{i-n} = \varphi_{(q-1)h - pg}(a_{r})$. So
\[ a_{i} = \varphi_h(a_{i-n}) = \varphi_{qh - pg}(a_{r}). \]
If $0 \leq i < n$, and $(q,p)\neq(0,0)$, then $q > 0$ and $p > 0$. Note that $a_{m - n + i} = \varphi_{(q-1)h - (p-1)g}(a_{r})$ by the induction hypothesis and so
\[ a_i = b_i = \varphi_{h-g}(a_{m - n + i}) = \varphi_{qh - pg}(a_{r}).\]
This completes the induction.

Write $d = q n - p m$ with $p,q \geq 0$. Let $f = q h - p g$. Then each $i$, $0 \leq i < m$, can be written as $i = k d + r$ with $0 \leq r < d$, and so $a_i = \varphi_{k f}(a_{r})$.

Let $C = a_0 \cdots a_{d-1}$. Then
\[ A = C \varphi_f(C) \cdots \varphi_{(m'-1)f}(C) \]
and so
\[ r = (A,g) = (C,f)^{m'}(\varepsilon,g - m' f).\]
Since for $0 \leq i < n$, $a_i = b_i$, we have
\[ s = (B,h) = (C,f)^{n'}(\varepsilon,h - n' f).\]
This is in the desired form: take $W = \varepsilon$, $V = C$, $x = f$, $y = g-m'f$, $z = h-n'f$, $k = m'$, and $\ell = n'$.

We still have to show that $\varphi_y(V) = \varphi_z(V) = V$. Noting that
\[ (n'q - 1)n - (n'p)m = n'(qn - pm) - n = n' d - n = 0 \]
we have, for all $0 \leq r < d$,
\[ a_r = \varphi_{(n'q - 1)h - n' p g}(a_r) = \varphi_{n' f - h}(a_{r}). \]
Similarly,
\[ a_r = \varphi_{m' f - g}(a_{r}). \]
Hence $\varphi_{g - m'f}(C) = \varphi_{h-n'f}(C) = C$.

\begin{case}
Neither $A$ nor $B$ is the trivial word, and both $B^{-1} A$ and $\varphi_h(A) \varphi_g(B)^{-1}$ are reduced words.
\end{case}

Note that $B^{-1} A = \varphi_h(A) \varphi_g(B)^{-1}$. We can make a transformation to reduce this to the previous case. Let
\[ A' = B^{-1} \qquad B' = \varphi_h(A) \qquad g' = -h \qquad h' = g. \]
Then $A' \varphi_{g'}(B') = B' \varphi_{h'}(A')$ and these are reduced words. Hence by the previous case there are $C \in \N$, $f \in \H/\R$, and $m,n \in \mathbb{Z}$ such that
\[ (A',g') = (C,f)^{m} (\varepsilon,g'-mf) \]
and
\[ (B',h') = (C,f)^n (\varepsilon,h'-nf) \]
and such that $\varphi_{g'-mf}(C) = C$ and $\varphi_{h'-nf}(C) = C$.
Now 
\begin{align*}
(A,g) &= (\varepsilon,-h)(\varphi_h(A),g)(\varepsilon,h) \\
&= (\varepsilon,-h)(B',h')(\varepsilon,h) \\
&= (\varepsilon,-h) (C,f)^{n} (\varepsilon,h'-nf) (\varepsilon,h)\\
&= (\varphi_{-h}(C),f)^n (\varepsilon,g-nf).
\end{align*}
Note that $\varphi_{g-nf}(C) = \varphi_{h'-nf}(C) = C$, and so $\varphi_{g-nf}(\varphi_{-h}(C)) = \varphi_{-h}(C)$. Similarly,
\begin{align*}
(B,h) &= (\varepsilon,-h)(B^{-1},-h)^{-1}(\varepsilon,h) \\
&= (\varepsilon,-h)(A',g')^{-1}(\varepsilon,h) \\
&= (\varepsilon,-h) (\varepsilon,g'-mf)^{-1} (C,f)^{-m} (\varepsilon,h)\\
&= (\varepsilon,mf) (C,f)^{-m} (\varepsilon,h)\\
&= (\varphi_{mf}(C),f)^{-m} (\varepsilon,h + mf).
\end{align*}
Since $\varphi_{h + mf}(C) = \varphi_{g'-mf}(C) = C$, $\varphi_{mf}(C) = \varphi_{-h}(C)$. So
\[ (B,h) = (\varphi_{-h}(C),f)^{-m} (\varepsilon,h + mf).\]
This completes this case, taking $W = \varepsilon$, $V = \varphi_{-h}(C)$, $x = f$, $y = g-nf$, $z = h+mf$, $k = n$, and $\ell = -m$.

\begin{case}
$|A| = 1$, $B$ is not the trivial word, and neither $A \varphi_g(B) = B \varphi_h(A)$ nor $B^{-1} A = \varphi_h(A) \varphi_g(B^{-1})$ are reduced words.
\end{case}

Let $A = a$. Then $a^{-1} = \varphi_g(b_0)$ and $b_{n-1} = \varphi_h(a^{-1})$. Recall that $B = b_0 \cdots b_{n-1}$. From the non-reduced words $A \varphi_g(B) = B \varphi_h(A)$, we get, as reduced words,
\[ \varphi_g(b_1) \varphi_g(b_2) \cdots \varphi_g(b_{n-1}) = b_0 b_1 \cdots b_{n-2}.\]
Then, for $0 \leq i < n - 1$ we get $\varphi_g(b_{i+1}) = b_{i}$. Thus $a = \varphi_{ng + h}(a)$. Also, letting $C = b_0$,
\[ r = (\varphi_g(C)^{-1},g) = (C,-g)^{-1} .\]
and
\[ s = (C,-g)^n(\varepsilon,h+ng) \]
Note that $\varphi_{h+ng}(C) = \varphi_{h+ng}(b_0) = b_0$ since $a = \varphi_{ng+h}(a)$ and $b_0 = \varphi_{-g}(a^{-1})$.

So in this case we take $W = \varepsilon$, $V = C$, $x = g$, $y = 0$, $z = h+ng$, $k = -1$, and $\ell = n$.

\begin{case}
$|B| = 1$, $A$ is not the trivial word, and neither $A \varphi_g(B) = B \varphi_h(A)$ nor $B^{-1} A = \varphi_h(A) \varphi_g(B^{-1})$ are reduced words.
\end{case}

This case is similar to the previous case.

\begin{case}
$|A|,|B| \geq 2$ and neither $A \varphi_g(B) = B \varphi_h(A)$ nor $B^{-1} A = \varphi_h(A) \varphi_g(B^{-1})$ are reduced words.
\end{case}

We have $b_{n-1} = \varphi_h(a_0)^{-1}$ and $\varphi_h(a_{m-1}) = \varphi_g(b_{n-1})$ and so
\[ \varphi_g(a_0) = \varphi_g(a_0^{-1})^{-1} = \varphi_{g-h}(b_{n-1})^{-1} = a_{m-1}^{-1}. \]
Letting
\[ A' = a_1 \cdots a_{m-2} = a_0^{-1} A \varphi_g(a_0) \]
and
\[ B' = a_0^{-1} b_0 b_1 \cdots b_{n-2} = a_0^{-1} B \varphi_h(a_0) \]
we have
\begin{align*}
B' \varphi_h(A') \varphi_g(B')^{-1} &= B' b_{n-1} \varphi_h(a_0) \varphi_h(A') \varphi_h(a_{m-1}) \varphi_g(b_{n-1})^{-1} \varphi_g(B')^{-1} \\
&= a_0^{-1} B \varphi_h(A) \varphi_g(B)^{-1} a_{m-1}^{-1} \\
& = a_0^{-1} A a_{m-1}^{-1} \\
& = A'.
\end{align*}
So $(A',g)$ and $(B',h)$ still commute.

Note that $|A'| < |A|$ and $|B'| \leq |B|$. So we only have to repeat this finitely many times until we are in one of the other cases. Thus, for some word $D$ we get reduced words
\[ A' = D A \varphi_g(D^{-1}) \]
and
\[ B' = D B \varphi_h(D^{-1}) \]
which fall into one of the other cases. So
\[ (A',g) = (C,f)^m(\varepsilon,g-mf) \]
and
\[ (B',h) = (C,f)^n(\varepsilon,h-nf).\]
Thus
\[ r = (D A' \varphi_g(D^{-1}),g) = (D,0)(A',g)(D^{-1},0) \]
and
\[ s = (D B' \varphi_h(D^{-1}),h) = (D,0)(B',h)(D^{-1},0)\]
are in the desired form.
\end{proof}

The next lemma gives a criterion for knowing that an element $r$ is in $(\H/\R)^\G$, but it requires knowing that two particular elements $s_1$ and $s_2$ are not in $(\H/\R)^\G$. This does not seem useful yet, but in Lemma \ref{lem:guarantee} we will show that any three elements $s_1$, $s_2$, and $s_3$, such that $r$ commutes with each of them but $s_1$, $s_2$, and $s_3$ pairwise do not commute, give rise to two such elements which are not in $(\H/\R)^\G$.

\begin{lemma}\label{lem:in H/R}
Let $r, s_1, s_2 \in \G$. Suppose that $r$ commutes with $s_1$ and $s_2$, but $s_1$ and $s_2$ do not commute. If $s_1,s_2 \notin (\H/\R)^\G$, then $r \in (\H/\R)^\G$.
\end{lemma}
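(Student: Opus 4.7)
The plan is to apply Lemma \ref{lem:commute} to each of the commuting pairs $(r, s_1)$ and $(r, s_2)$ and use the rigidity of the two resulting decompositions of $r$ to force $s_1$ and $s_2$ to commute unless $r \in (\H/\R)^\G$.

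Since the hypotheses and conclusion of Lemma \ref{lem:in H/R} are invariant under simultaneous conjugation of $r, s_1, s_2$ by a single element of $\G$, I first apply Lemma \ref{lem:commute} to $(r, s_1)$ and then conjugate by the $(W,0)^{-1}$ it produces, so as to arrange that
\[
r = (V, x)^{k_1}(\varepsilon, y_1), \qquad s_1 = (V, x)^{\ell_1}(\varepsilon, z_1).
\]
Next I dispose of degenerate cases. If $k_1 = 0$ then $r = (\varepsilon, y_1) \in (\H/\R)^\G$ and the lemma holds; if $\ell_1 = 0$ then $s_1 = (\varepsilon, z_1) \in (\H/\R)^\G$, contradicting the hypothesis on $s_1$; and if $V = \varepsilon$ then $r = (\varepsilon, k_1 x + y_1) \in (\H/\R)^\G$ and again we are done. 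So I may assume $V \neq \varepsilon$ and $k_1, \ell_1 \neq 0$, which via Lemma \ref{lem:commute} also gives $\varphi_{y_1}(V) = V = \varphi_{z_1}(V)$. A short direct computation using these fixed-point conditions shows that
\[
\A \;:=\; \{(V, x)^k (\varepsilon, w) : k \in \mathbb{Z},\ \varphi_w(V) = V\}
\]
is an abelian subgroup of $\G$, and it already contains both $r$ and $s_1$.

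I then apply Lemma \ref{lem:commute} a second time, to the pair $(r, s_2)$, obtaining
\[
r = (W, 0)(V', x')^{k_2}(\varepsilon, y')(W, 0)^{-1}, \qquad s_2 = (W, 0)(V', x')^{\ell_2}(\varepsilon, z')(W, 0)^{-1},
\]
with $k_2, \ell_2 \neq 0$ and $V' \neq \varepsilon$ by the same reasoning as before. The whole proof will be complete if I can show $s_2 \in \A$: because then both $s_1$ and $s_2$ lie in the abelian group $\A$, so they commute, contradicting the hypothesis; hence the assumption $r \notin (\H/\R)^\G$ was false.

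The main obstacle is precisely this final step: extracting $s_2 \in \A$ from the coexistence of the two decompositions of $r$. I would equate the two expressions for $r$. Comparing $\H/\R$-coordinates yields $k_1 x + y_1 = k_2 x' + y'$, while comparing $\N$-coordinates yields a word equation
\[
V^{[k_1]}_x \;=\; W \cdot {V'}^{[k_2]}_{x'} \cdot \varphi_{k_2 x' + y'}(W^{-1})
\]
in the free group $\N$, where $U^{[k]}_t := U \varphi_t(U) \cdots \varphi_{(k-1)t}(U)$. A free-group root-uniqueness argument inside $\N$, in the spirit of the reduced-word case analysis carried out in the proof of Lemma \ref{lem:commute}, should then force the conjugate $(W,0)(V', x')(W,0)^{-1}$ to differ from $(V, x)$ only by a factor lying in $\A$ (a translation in $\H/\R$ which fixes $V$), which places $s_2$ in $\A$ and completes the contradiction. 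This reduced-word analysis, with its associated case split (depending on whether $|V^{[k_1]}_x|$ and $|W V'^{[k_2]}_{x'} \varphi_{\cdot}(W^{-1})|$ reduce or not), is where essentially all of the technical work lies.
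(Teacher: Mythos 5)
Your setup and strategy match the paper's proof: apply Lemma \ref{lem:commute} to each commuting pair, conjugate to normalize, observe that the elements of the form $(V,x)^k(\varepsilon,w)$ with $\varphi_w(V)=V$ form an abelian set containing $r$ and $s_1$, and then argue that $s_2$ must land in the same abelian set, contradicting the non-commutation of $s_1$ and $s_2$. That is exactly the plan the paper executes. However, the sentence in which you defer ``a free-group root-uniqueness argument inside $\N$'' is not a small remaining detail; it is the entire content of the lemma. The paper's proof of this lemma occupies several paragraphs of genuinely nontrivial work, none of which is subsumed by ordinary root-uniqueness for free groups, because the word equation you wrote down involves \emph{twisted} powers $U^{[k]}_t = U\varphi_t(U)\cdots\varphi_{(k-1)t}(U)$ together with an extra conjugator $W$ whose letters need not be fixed by the relevant automorphisms. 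The structure of the argument you would need is: (i) normalize so that $U^{[k]}_t$ and $V'^{[k']}_{t'}$ are reduced words; (ii) a ``high-power'' argument, raising both sides of the equation to a power large enough that the conjugator $B^{-1}A$ (your $W$) is swallowed up, so as to split $W$ as $E_2^{-1}E_1$ with $E_1,E_2$ fixed by the appropriate $\varphi$'s, and then reduce to the case $W=\varepsilon$ on one side; (iii) a crucial dichotomy (either the exponent on one side is $1$ or the residual conjugator is trivial), proved by a letter-by-letter inspection of the leading and trailing letters; and (iv) in the trivial-conjugator branch, a $\gcd$/lexicographic-induction argument (mirroring the one inside Lemma \ref{lem:commute}) producing a common twisted root $E$ of $V$ and $V'$. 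Without at least sketching (ii) and (iii), the reader cannot see why the conjugator $W$ does not obstruct your conclusion, nor why the two twisted-power expressions are forced to share a common root rather than merely overlap partially. So the proposal is not yet a proof; it identifies the right approach and then stops exactly where the difficulty begins.
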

\begin{proof}
Suppose to the contrary that $r \notin (\H/\R)^\G$. Since $r$ and $s_1$ commute, and $r$ and $s_2$ commute, by Lemma \ref{lem:commute} we can write
\begin{align*}
r &= (A,0) (C,f_1)^{m_1} (\varepsilon,g_1) (A^{-1},0) = (B,0) (D,f_2)^{m_2} (\varepsilon,g_2) (B^{-1},0) \\
s_1 &= (A,0) (C,f_1)^{n_1} (\varepsilon,h_1) (A^{-1},0) \\
s_2 &= (B,0) (D,f_2)^{n_2} (\varepsilon,h_2) (B^{-1},0)
\end{align*}
Since $r$, $s_1$, and $s_2$ are not in $(\H/\R)^\G$, $C$ and $D$ are non-trivial and $m_1,m_2,n_1,n_2 \neq 0$. So $\varphi_{g_1}(C) = \varphi_{h_1}(C) = C$ and $\varphi_{g_2}(D) = \varphi_{h_2}(D) = D$. Moreover, we will argue that we may assume that
\[ C \varphi_{f_1}(C) \cdots \varphi_{(m_1 - 1)f_1}(C) \text{ and } D \varphi_{f_2}(D) \cdots \varphi_{(m_2 - 1)f_2}(D) \]
are reduced words. If the former is not a reduced word, then it must have length at least 2, and we can write $C = a C' \varphi_{f_1}(a^{-1})$. Then
\[ C \varphi_{f_1}(C) \cdots \varphi_{(m_1 - 1)f_1}(C) = a C' \varphi_{f_1}(C') \cdots \varphi_{(m_1 - 1)f_1}(C') \varphi_{m_1 f_1}(a^{-1}) \]
and so, since $\varphi_{g_1}$ fixes $C$ and hence $a$,
\[ r = (A a, 0)(C',f_1)^{m_1}(\varepsilon,g_1)(a^{-1} A^{-1},0).\]
Similarly,
\[ s_1 = (A a, 0)(C',f_1)^{n_1}(\varepsilon,h_1)(a^{-1} A^{-1},0).\]
So we may replace $A$ by $Aa$ and $C$ by $C'$. We can continue to do this until $C \varphi_{f_1}(C) \cdots \varphi_{(m_1 - 1)f_1}(C)$ is a reduced word. The same argument works for $D \varphi_{f_2}(D) \cdots \varphi_{(m_2 - 1)f_2}(D)$.

Rearranging the two expressions for $r$, we get
\[ (B^{-1} A,0) (C,f_1)^{m_1} (\varphi_{g_1}(A^{-1} B),g_1) = (D,f_2)^{m_2} (\varepsilon,g_2). \]
Looking at the first coordinate,
\begin{eqnarray*}
&&B^{-1} A C \varphi_{f_1}(C) \varphi_{2 f_1}(C) \cdots \varphi_{(m_1-1)f_1}(C) \varphi_{m_1 f_1 + g_1}(A^{-1}B) \\
&=& D \varphi_{f_2}(D) \varphi_{2 f_2}(D) \cdots \varphi_{(m_2 - 1) f_2}(D).
\end{eqnarray*}
We claim that we can write $B^{-1} A = E_2^{-1} E_1$ where $\varphi_{g_1}(E_1) = \varphi_{h_1}(E_1) = E_1$ and $\varphi_{g_2}(E_2) = \varphi_{h_2}(E_2) = E_2$.
Recall that 
\[ C \varphi_{f_1}(C) \varphi_{2 f_1}(C) \cdots \varphi_{(m_1-1)f_1}(C) \]
is a non-trivial reduced word. Taking a high enough power $\ell$, the length of
\[ (C \varphi_{f_1}(C) \varphi_{2 f_1}(C) \cdots \varphi_{(m_1-1)f_1}(C))^{\ell} \]
as a reduced word is more than twice the length of $B^{-1} A$. Then
\begin{eqnarray*}
&& B^{-1} A (C \varphi_{f_1}(C) \varphi_{2 f_1}(C) \cdots \varphi_{(m_1-1)f_1}(C))^{\ell} \varphi_{m_1 f_1 + g_1}(A^{-1}B) \\
&=& (D \varphi_{f_2}(D) \varphi_{2 f_2}(D) \cdots \varphi_{(m_2 - 1) f_2}(D))^{\ell}.
\end{eqnarray*}
We can write $B^{-1} A = E_2^{-1} E_1$ as a reduced word where $E_2^{-1}$ appears at the start of the right hand side when it is written as a reduced word, and $E_1$ cancels with the beginning of $(C \varphi_{f_1}(C) \varphi_{2 f_1}(C) \cdots \varphi_{(m_1-1)f_1}(C))^{\ell}$. Thus $E_1$ is fixed by $\varphi_{g_1}$ and $\varphi_{h_1}$ since they fix each letter appearing in the word $(C \varphi_{f_1}(C) \varphi_{2 f_1}(C) \cdots \varphi_{(m_1-1)f_1}(C))^{\ell}$, and $E_2$ is fixed by $\varphi_{g_2}$ and $\varphi_{h_2}$ since they fix each letter appearing in the right hand side.

Since $E_2 B^{-1} = E_1 A^{-1}$,
\begin{align*}
E_2 B^{-1} r B E_2^{-1} &= (E_1,0) (C,f_1)^{m_1} (\varepsilon,g_1) (E_1^{-1},0) \\
& = (E_2,0) (D,f_2)^{m_2} (\varepsilon,g_2) (E_2^{-1},0) \\
E_2 B^{-1} s_1 B E_2^{-1} &= (E_1,0) (C,f_1)^{n_1} (\varepsilon,h_1) (E_1^{-1},0) \\
E_2 B^{-1} s_2 B E_2^{-1} &= (E_2,0) (D,f_2)^{n_2} (\varepsilon,h_2) (E_2^{-1},0).
\end{align*}
So, applying the automorphism of $\G$ given by conjugating by $E_2 B^{-1}$ (and noting that this automorphism fixes $(\H/\R)^\G$) we may assume from the beginning that $\varphi_{g_1}(A) = \varphi_{h_1}(A) = A$ and $\varphi_{g_2}(B) = \varphi_{h_2}(B) = B$. Thus
\begin{align*}
r &= (A,0) (C,f_1)^{m_1} (A^{-1},0) (\varepsilon,g_1) = (B,0) (D,f_2)^{m_2} (B^{-1},0) (\varepsilon,g_2) \\
s_1 &= (A,0) (C,f_1)^{n_1} (A^{-1},0) (\varepsilon,h_1) \\
s_2 &= (B,0) (D,f_2)^{n_2} (B^{-1},0) (\varepsilon,h_2). 
\end{align*}

Now looking at the first coordinate, we have
\begin{eqnarray*}
&& A C \varphi_{f_1}(C) \varphi_{2 f_1}(C) \cdots \varphi_{(m_1-1)f_1}(C) \varphi_{m_1 f_1}(A)^{-1} \\
&=& B D \varphi_{f_2}(D) \varphi_{2 f_2}(D) \cdots \varphi_{(m_2 - 1) f_2}(D) \varphi_{m_2 f_2}(B)^{-1}.
\end{eqnarray*}
Our next step is to argue that we may assume that these are reduced words. Suppose that there was some cancellation, say $A = A' a$ and $C = a^{-1} C'$. Let $C^* = C' \varphi_{f_1}(a^{-1})$. Then
\begin{eqnarray*}
&& A C \varphi_{f_1}(C) \varphi_{2 f_1}(C) \cdots \varphi_{(m_1-1)f_1}(C) \varphi_{m_1 f_1}(A)^{-1} \\
&=& A' C^* \varphi_{f_1}(C^*) \varphi_{2 f_1}(C^*) \cdots \varphi_{(m_2-1)f_1}(C^*) \varphi_{m_1 f_1}(A')^{-1}.
\end{eqnarray*}
Thus
\begin{align*}
r &= (A',0) (C^*,f_1)^{m_1} (\varepsilon,g_1) (A',0)^{-1} \\
s_1 &= (A',0) (C^*,f_1)^{n_1} (\varepsilon,h_1) (A',0)^{-1}.
\end{align*}
Note that
\[ (C^*,f_1)^{m_1} = C^* \varphi_{f_1}(C^*) \varphi_{2 f_1}(C^*) \cdots \varphi_{(m_1-1)f_1}(C^*) \]
is still a reduced word. If it was not a reduced word, then we would have $m_1 > 0$, $|C^*| > 1$, and $\varphi_{f_1}(a^{-1}) = \varphi_{f_1}(a')^{-1}$, where $a'$ is the first letter of $C^*$. Thus $a' = a$ is the second letter of $C$, which together with the fact that the first letter of $C$ is $a^{-1}$ contradicts our assumption that $C$ is a reduced word. We have reduced the size of $A$, so after finitely many reductions of this form, we get
\begin{eqnarray*}
&& A C \varphi_{f_1}(C) \varphi_{2 f_1}(C) \cdots \varphi_{(m_1-1)f_1}(C) \varphi_{m_1 f_1}(A)^{-1} \\
&=& B D \varphi_{f_2}(D) \varphi_{2 f_2}(D) \cdots \varphi_{(m_2 - 1) f_2}(D) \varphi_{m_2 f_2}(B)^{-1}
\end{eqnarray*}
and that both sides are reduced words.

Now either $|A| \leq |B|$ or $|B| \leq |A|$. Without loss of generality, assume that we are in the first case. Then $A$ is an initial segment of $B$ (i.e., $B = A B'$ as a reduced word). Then by replacing $r$, $s_1$, and $s_2$ with $A^{-1} r A$, $A^{-1} s_1 A$, and $A^{-1} s_2 A$, we may assume that $A$ is trivial. To summarize the reductions we have made so far, we have
\begin{align*}
r &= (C,f_1)^{m_1} (\varepsilon,g_1) = (B,0) (D,f_2)^{m_2} (\varepsilon,g_2) (B^{-1},0) \\
s_1 &= (C,f_1)^{n_1} (\varepsilon,h_1) \\
s_2 &= (B,0) (D,f_2)^{n_2} (\varepsilon,h_2) (B^{-1},0).
\end{align*}
The automorphisms $\varphi_{g_1}$ and $\varphi_{h_1}$ fix $C$, and the automorphisms $\varphi_{g_2}$ and $\varphi_{h_2}$ fix $D$ and $B$. Both sides of
\begin{eqnarray*}
&& C \varphi_{f_1}(C) \varphi_{2 f_1}(C) \cdots \varphi_{(m_1-1)f_1}(C) \\
&=& B D \varphi_{f_2}(D) \varphi_{2 f_2}(D) \cdots \varphi_{(m_2 - 1) f_2}(D) \varphi_{m_2 f_2}(B)^{-1}
\end{eqnarray*}
are reduced words.

Now we will show that either $m_1 = 1$ or $B$ is trivial. Suppose that $B$ was non-trivial, say $B = b B'$. First note that the length of $C$ is greater than one, as otherwise $C = b$ and $\varphi_{(m_1-1) f_1}(C) = \varphi_{m_2 f_2}(b^{-1})$; but there is no $e \in \H/\R$ such that $\varphi_e(b) = b^{-1}$. Then we must have $C = b C' \varphi_{m_2 f_2 - (m_1-1) f_1}(b^{-1})$ for some $C'$. We have $m_1 f_1 + g_1 = m_2 f_2 + g_2$. Since $b$ appears both in $C$ and in $B$, it is fixed by both $\varphi_{g_1}$ and $\varphi_{g_2}$. Thus $C = b C' \varphi_{f_1}(b^{-1})$. But then if $m_1 > 1$,
\[ C \varphi_{f_1}(C) \varphi_{2 f_1}(C) \cdots \varphi_{(m_1-1)f_1}(C) \]
is not a reduced word. So we conclude that either $m_1 = 1$ or $B$ is trivial.

\begin{case}
Suppose that $m_1 = 1$. 
\end{case}

We have
\[ r = (C,f_1)(\varepsilon,g_1) = (B,0) (D,f_2)^{m_2} (\varepsilon,g_2) (B^{-1},0). \]
Also, as reduced words,
\[ C = B D \varphi_{f_2}(D) \varphi_{2 f_2}(D) \cdots \varphi_{(m_2 - 1) f_2}(D) \varphi_{m_2 f_2}(B)^{-1}. \]
Since the right hand side is a reduced word, $\varphi_{g_1}$ and $\varphi_{h_1}$ fix $B$ and $D$ since each letter in $B$ and $D$ appears in $C$. Thus
\[ s_1 = (C,f_1)^{n_1} (\varepsilon,h_1) = [(B,0)(D,f_2)^{m_2}(B^{-1},0) (\varepsilon,f_1 - m_2 f_2)]^{n_1} (\varepsilon,h_1).\]
Now $f_1 + g_1 = m_2 f_2 + g_2$. Since $\varphi_{g_1}$ and $\varphi_{g_2}$ fix $B$ and $D$, $\varphi_{f_1 - m_2 f_2}$ also fixes $B$ and $D$. Thus
\[ s_1 = (B,0)(D,f_2)^{m_2 n_1} (\varepsilon,h_1 + n_1(f_1 - m_2 f_2)) (B^{-1},0) \]
and $h_1 + n_1(f_1 - m_2 f_2)$ fixes $D$. Thus $s_1$ and $s_2$ commute. This is a contradiction.

\begin{case}
$B$ is trivial.
\end{case}

Let $|C| = k$ and $|D| = \ell$. Suppose without loss of generality that $k \geq \ell$.
Let $d_0,d_1,d_2,\ldots$ be the reduced word
\[ C \varphi_{f_1}(C) \varphi_{2 f_1}(C) \cdots \varphi_{(m_1-1)f_1}(C) = D \varphi_{f_2}(D) \varphi_{2 f_2}(D) \cdots \varphi_{(m_2-1)f_2}(D).\]
Then we have
\begin{align*}
d_i &= \varphi_{f_2}(d_{i-\ell}) &&\text{for $i \geq \ell$}\\
\varphi_{(m_1 - 1)f_1}(d_{k - \ell + i}) &= \varphi_{(m_2 - 1)f_2}(d_{i}) &&\text{for $0 \leq i < \ell$}
\end{align*}
Let $e = \gcd(k,\ell)$.

Given $p,q \geq 0$, write $i = q \ell - p k + r$ with $0 \leq r < e$ and assume that $0 \leq i < m_1 k = m_2 \ell$. Note that every $i$, $0 \leq i < m_1 k = m_2 \ell$, can be written in such a way. We claim that
\[ d_i = \varphi_{qf_2 + p [(m_1-1) f_1 - m_1 f_2]}(d_{r}). \]
We argue by induction, ordering pairs $(q,p)$ lexicographically. For the base case $p = q = 0$ we note that $d_r = \varphi_0(d_r)$. 
If $\ell \leq i$, then we must have $q > 0$. By the induction hypothesis, $d_{i-\ell} = \varphi_{(q-1)f_2 + p [(m_1-1)f_1 - m_2f_2]}(d_r)$. So
\[ d_{i} = \varphi_{f_2}(d_{i-\ell}) = \varphi_{qf_2 + p [(m_1-1) f_1 - m_2f_2]}(d_{r}). \]
If $0 \leq i < \ell$, and $(q,p)\neq(\varepsilon,0)$, then $q > 0$ and $p > 0$. Note that
\[ d_{k - \ell + i} = \varphi_{(q-1)f_2 + (p-1) [(m_1-1) f_1 - m_2f_2]}(d_{r}) = \varphi_{(q f_2 + p [(m_1-1) f_1 - m_2f_2] - [(m_1-1)f_1 - (m_2-1)f_2]}(d_{r}) \]
by the induction hypothesis and so
\[ d_i = \varphi_{(m_1 - 1)f_1 - (m_2 - 1)f_2}(d_{i+k-\ell}) = \varphi_{qf_2 + p [(m_1-1) f_1 - m_2f_2]}(c_{r}).\]
This completes the induction.

Write $e = q \ell - p k$ with $p,q \geq 0$. Let $f = qf_2 + p [(m_1-1) f_1 - m_2f_2]$. Then each $i$, $0 \leq i < k m_1$, can be written as $i = s e + r$ with $0 \leq r < d$, and so
\[ d_i = \varphi_{s f}(d_{r}). \]
Let $E = d_1 \cdots d_e$. Then
\[ C = E \varphi_f(E) \cdots \varphi_{(\frac{k}{e}-1)f}(E). \]
Similarly,
\[ D = E \varphi_f(E) \cdots \varphi_{(\frac{\ell}{e}-1)f}(E). \]
Also,
\[ \varphi_{f_1}(E) = d_k \cdots d_{k + e - 1} = \varphi_{\frac{k}{e} f}(d_0,\ldots,d_{e-1}) = \varphi_{\frac{k}{e} f}(E) \]
and
\[ \varphi_{f_2}(E) = d_\ell \cdots d_{\ell + e - 1} = \varphi_{\frac{\ell}{e} f}(d_0,\ldots,d_{e-1}) = \varphi_{\frac{\ell}{e} f}(E). \]
So $\varphi_{f_1}(C) = \varphi_{\frac{k}{e} f}(C)$ and $\varphi_{f_2}(D) = \varphi_{\frac{\ell}{e} f}(D)$.
Hence
\[ s_1 = (C,f_1)^{m_1} (\varepsilon,h_1) = (E,f)^{\frac{m_1 k}{e}} (\varepsilon,h_1 + m_1 f_1 - \frac{m_1 k}{e} f) \]
and
\[ s_2 = (D,f_2)^{m_1} (\varepsilon,h_2) = (E,f)^{\frac{m_2 \ell}{e}} (\varepsilon,h_2 + m_2 f_2 - \frac{m_2 \ell}{e} f) \]
Note that $\varphi_{h_1}$ and $\varphi_{h_2}$ both fix $E$, since they fix $C$ and $D$ respectively. Also, since $\varphi_{f_1}(E) = \varphi{\frac{k}{e}f}(E)$, $\varphi_{m_1 f_1 - \frac{m_1 k}{e} f}$ fixes $E$. Similarly, $\varphi_{m_2 f_2 - \frac{m_2 \ell}{e} f}$ fixes $E$ . So
$s_1$ and $s_2$ commute. This is a contradiction.
\end{proof}

\begin{lemma}\label{lem:powers}
Fix $r \in \G$. If $r^2 \in \H/\R$, then $r \in \H/\R$.
\end{lemma}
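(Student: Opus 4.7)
The plan is to work in the semidirect product decomposition of $\G = \N \rtimes (\H/\R)$. Write $r = (A, g)$ with $A \in \N$ and $g \in \H/\R$, so that $r^2 = (A \varphi_g(A), 2g)$. The hypothesis $r^2 \in \H/\R$ (under the inclusion $h \mapsto (\varepsilon, h)$) forces $A \varphi_g(A) = \varepsilon$ in the free group $\N$, equivalently $\varphi_g(A) = A^{-1}$. My goal is to show that this forces $A = \varepsilon$.

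The crucial structural point is that each $\varphi_g$ permutes the free generators of $\N$ without inverting them: every generator $u_i, v_{i,h}, w_{i,h}, x_{i,h}, y_{i,h}, z_{i,h}$ is sent to a generator of the same type. Hence if $A = a_1 \cdots a_n$ is reduced, then $\varphi_g(A) = \varphi_g(a_1) \cdots \varphi_g(a_n)$ is reduced of the same length, and matching letter-by-letter against $A^{-1} = a_n^{-1} \cdots a_1^{-1}$ yields $\varphi_g(a_i) = a_{n+1-i}^{-1}$ for all $i$. If $n$ is odd, the middle letter satisfies $\varphi_g(a_{(n+1)/2}) = a_{(n+1)/2}^{-1}$, which is impossible since $\varphi_g$ never maps a generator to an inverse of a generator. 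If $n = 2m > 0$ is even, the two central letters give $\varphi_g(a_m) = a_{m+1}^{-1}$ and $\varphi_g(a_{m+1}) = a_m^{-1}$; composing these yields $\varphi_{2g}(a_m) = a_m$.

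According to the type of $a_m$, the condition $\varphi_{2g}(a_m) = a_m$ translates into $2\bar g = 0$ in one of $\H/\R$, $\H/\V_i$, $\H/\W_i$, $\H/\X_i$, $\H/\Y_i$, or $\H/\Z_i$. I now need that each of these quotients is $2$-torsion-free. Using the decomposition $\H/\R = \bigoplus_i (\langle \alpha_i, \beta_i \rangle / \R_i) \oplus \bigoplus_i \langle \gamma_i \rangle$ established earlier (each direct summand is free abelian of rank $1$ or $2$), the additional relation defining $\V_i$, $\W_i$, or $\X_i$ kills an element by an odd prime ($p_i$, $q_i$, or $r_i$), producing only odd torsion; this is precisely where the footnote that $2$ is absent from these three prime lists enters. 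The relations defining $\Y_i$ and $\Z_i$ merely identify generators across free summands and leave the quotient torsion-free. In every case $\bar g = 0$, so $\varphi_g$ actually fixes $a_m$, forcing $a_{m+1} = a_m^{-1}$ and contradicting that $A$ is reduced. Both parity cases thus rule out $n > 0$, giving $A = \varepsilon$ and $r \in \H/\R$. The main bookkeeping task I anticipate is checking $2$-torsion-freeness cleanly for each of the five types of quotients, which reduces to small Smith-normal-form computations or to the direct observation that the added relation's coefficients are coprime to $2$.
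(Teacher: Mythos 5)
Your proof is correct and follows essentially the same strategy as the paper's: both rule out odd $|A|$ because no $\varphi_g$ sends a generator of $\N$ to an inverse of a generator, and both reduce the even case to the absence of $2$-torsion in $\H/\R$, $\H/\V_i$, $\H/\W_i$, $\H/\X_i$, $\H/\Y_i$, $\H/\Z_i$ (the paper phrases this as: $\varphi_{2f}(B)=B$ for the first half $B$ of $A$, yet $\varphi_f(B)\neq B$, which is impossible since $p_i,q_i,r_i$ are odd).
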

\begin{proof}
Write $r = (A,f)$. We will show that if $r \notin \H/\R$, i.e.\ if $A \neq \varepsilon$, then $r^2 \notin \H/\R$. Since
\[ r^2 = (A \varphi_f(A),2f) \]
we must show that $A \varphi_f(A)$ is non-trivial. Suppose that it was trivial; then the length of $A$ as a reduced word must be even. (If the length of $A$ was odd, say $A = A_1 a A_2$ with $A_1$ and $A_2$ of equal lengths, then \[ A \varphi_f(A) = A_1 a A_2 \varphi_f(A_1) \varphi_f(a) \varphi_f(A_2) = \varepsilon.\] So it must be that $\varphi_f(a) = a^{-1}$, which cannot happen for any letter $a$.) Write $A = B C$, where $B$ and $C$ are each half the length of $A$. Then since $A \varphi_f(A)$ is the trivial word, $C \varphi_f(B)$ is the trivial word; thus $C = \varphi_f(B^{-1})$. So $A = B \varphi_f(B^{-1})$, and
\[ A \varphi_f(A) = B \varphi_f(B^{-1}) \varphi_f(B) \varphi_{2f}(B^{-1}) = B \varphi_{2f}(B^{-1}). \]
Since $A \varphi_f(A)$ is the trivial word, $\varphi_{2f}(B) = B$. Since $A$ is not the trivial word, $B \neq \varphi_f(B)$. But this is impossible, as $p_i$, $q_i$, and $r_i$ were all chosen to be odd primes.
\end{proof}

The next lemma is the heart of the existential definition of $(\H/\R)^\G$. The proof is to show that under the hypotheses of the lemma, elements not in $(\H/\R)^\G$ such as in Lemma \ref{lem:in H/R} must exist.

\begin{lemma}\label{lem:guarantee}
Let $r, s_1, s_2, s_3 \in \G$. Suppose that $r$ commutes with $s_1$, $s_2$, and $s_3$, but that no two of $s_1$, $s_2$, and $s_3$ commute. Then $r \in (\H/\R)^\G$.
\end{lemma}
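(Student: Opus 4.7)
The plan is proof by contradiction. Assume $r \notin (\H/\R)^\G$. The strategy is to exhibit a pair $t_1, t_2 \notin (\H/\R)^\G$ that commute with $r$ but not with each other; applying Lemma \ref{lem:in H/R} to $r, t_1, t_2$ then forces $r \in (\H/\R)^\G$, contradicting the assumption.

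First, I would run a pigeonhole argument at the level of pairs. For each pair $\{s_i, s_j\}$, the hypotheses of Lemma \ref{lem:in H/R} are satisfied except possibly the clause ``$s_i, s_j \notin (\H/\R)^\G$'': $r$ commutes with both and they do not commute. Its contrapositive therefore forces at least one of $s_i, s_j$ into $(\H/\R)^\G$. Applying this to all three pairs and counting, at least two of $s_1, s_2, s_3$ lie in $(\H/\R)^\G$; relabel so that $s_1, s_2 \in (\H/\R)^\G$. Since $s_1, s_2$ do not commute, the contrapositive of Lemma \ref{lem:H/R commute} gives $s_1 s_2 \notin (\H/\R)^\G$, and clearly $s_1 s_2$ commutes with $r$.

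The natural candidate for the second witness is the $s_3$-conjugate $t := s_3(s_1 s_2)s_3^{-1}$. Closure of $(\H/\R)^\G$ under conjugation keeps $t \notin (\H/\R)^\G$; the relation $s_3 r s_3^{-1} = r$ keeps $t$ commuting with $r$; and $t$ commutes with $s_1 s_2$ precisely when $s_3$ does. Whenever $[s_3, s_1 s_2] \neq 1$, the pair $(t_1, t_2) := (s_1 s_2, t)$ completes the argument via Lemma \ref{lem:in H/R}.

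The residual case is $[s_3, s_1 s_2] = 1$. There I would repeat the conjugation trick with each of $s_2 s_1$, $s_1^{-1} s_2$, $s_1 s_2^{-1}$, $s_1^{-1} s_2^{-1}$ in place of $s_1 s_2$; each of these is still outside $(\H/\R)^\G$ by Lemma \ref{lem:H/R commute} (since $s_1^{\pm 1}, s_2^{\pm 1} \in (\H/\R)^\G$ fail pairwise to commute). Either one of these yields the needed non-commuting pair and finishes the proof, or $s_3$ commutes with all such products; forming quotients such as $(s_1 s_2)(s_1^{-1} s_2)^{-1} = s_1^2$ and $(s_1 s_2)(s_2 s_1)^{-1} = [s_1, s_2]$ then gives $[s_3, s_1^2] = [s_3, s_2^2] = [s_3,[s_1,s_2]] = 1$. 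Since the hypothesis is symmetric in $s_1, s_2, s_3$, the same procedure may be applied with the roles of the $s_i$'s interchanged (whenever the relabelled pair again lies in $(\H/\R)^\G$); the resulting accumulated commutation relations are incompatible with $s_1, s_2, s_3$ being pairwise non-commuting, producing the final contradiction.

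The bulk of the proof — the pigeonhole reduction and the single conjugation trick — is routine; the genuine obstacle is the residual case, where one must carefully track which products $s_i^{\pm 1} s_j^{\pm 1}$ are guaranteed to sit outside $(\H/\R)^\G$, which commutation relations the repeated applications of Lemma \ref{lem:in H/R} actually force, and how these chain together to violate pairwise non-commutativity. I expect this bookkeeping, rather than any new structural insight, to be the main technical hurdle.
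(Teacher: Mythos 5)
Your pigeonhole reduction (two of the $s_i$ lie in $(\H/\R)^\G$; relabel as $s_1, s_2$) and the observation that $s_1 s_2 \notin (\H/\R)^\G$ exactly match the paper's opening moves, and the overall strategy of producing a second non-$(\H/\R)^\G$ element commuting with $r$ and not with $s_1 s_2$, then invoking Lemma~\ref{lem:in H/R}, is right. Where the proposal diverges and ultimately fails is in the choice of second witness. You conjugate $s_1 s_2$ by $s_3$, which leaves a ``residual case'' where $s_3$ commutes with $s_1 s_2$ (and with $s_1^{-1} s_2$, $s_1 s_2^{-1}$, etc.), and you do not actually resolve that case: the claim that the ``accumulated commutation relations are incompatible with $s_1, s_2, s_3$ being pairwise non-commuting'' is asserted, not proved, and it is not at all clear that mere rearranging of commutator identities in a free-product-like group suffices. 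This is a genuine gap, not routine bookkeeping.

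The paper sidesteps $s_3$ entirely by taking $s_1 s_2^2$ as the second witness. One checks directly that $r$ commutes with it, that it fails to commute with $s_1 s_2$ (since $[s_1 s_2, s_1 s_2^2] = 1$ forces $s_1 s_2 = s_2 s_1$), and --- this is the missing idea in your proposal --- that $s_1 s_2^2 \notin (\H/\R)^\G$. For this last point, if $s_1 s_2^2 \in (\H/\R)^\G$ then Lemma~\ref{lem:H/R commute} puts $s_1$ and $s_2^2$ into a common conjugate $(A,0)(\H/\R)(A^{-1},0)$, and Lemma~\ref{lem:powers} (the statement that squaring cannot move you out of $\H/\R$, which uses that all the primes $p_i, q_i, r_i$ are odd) then forces $s_2$ itself into that same conjugate, making $s_1$ and $s_2$ commute, a contradiction. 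Your proposal never invokes Lemma~\ref{lem:powers}, and it is precisely this oddness-of-primes ingredient that makes the construction work; even if your residual case could be salvaged (e.g.\ by arguing that $[s_3, s_1^2] = 1$ forces $[s_3, s_1] = 1$ when $s_1 \in (\H/\R)^\G$), the proof of that step would require the same observation. As written, the proposal is incomplete.
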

\begin{proof}
If at least two of $s_1$, $s_2$, and $s_3$ are not in $(\H/\R)^\G$, then this follows immediately by Lemma \ref{lem:in H/R}. Otherwise, without loss of generality suppose that $s_1$ and $s_2$ are in $(\H/\R)^\G$. By Lemma \ref{lem:H/R commute}, $s_1 s_2 \notin (\H/\R)^\G$.

Note that $r$ commutes with $s_1 s_2$ and with $s_1 (s_2)^2$. Also, $s_1 s_2$ does not commute with $s_1 (s_2)^2$, since if it did, then
\[ s_1 s_2 s_1 s_2 s_2 = s_1 s_2 s_2 s_1 s_2 \Rightarrow s_1 s_2 = s_2 s_1.\]
We claim that $s_1 (s_2)^2 \notin (\H/\R)^\G$. If $s_1 (s_2)^2$ was in $(\H/\R)^\G$, then by Lemma \ref{lem:H/R commute}, we could write
\[ s_1 = (A,0)(\varepsilon,g)(A^{-1},0) \text{ and } (s_2)^2 = (A,0)(\varepsilon,h)(A^{-1},0). \]
Then let $s_2' = (A^{-1},0) s_2 (A,0) = (C,f)$. Then $(s_2')^2 = (\varepsilon,h)$, and so by Lemma \ref{lem:powers}, $s_2' = (\varepsilon,f)$. Thus $s_2 = (A,0)(\varepsilon,f)(A^{-1},0)$. So $s_1$ and $s_2$ would commute; since we know that $s_1$ and $s_2$ do not commute, $s_1 (s_2)^2 \notin (\H/\R)^\G$.

By Lemma \ref{lem:in H/R}, with $r$, $s_1 s_2$, and $s_1 s_2^2$, we see that $r$ is in $(\H/\R)^\G$.
\end{proof}

The existential definition of $(\H/\R)^\G$ comes from the previous lemma. It remains only to show that if $r \in (\H/\R)^\G$, then the hypothesis of the previous lemma is satisfied.

\begin{proof}[Proof of Lemma \ref{cor:H/R def}]
By the previous lemma, it suffices to show that if $r \in (\H/\R)^\G$, then there are $s_1$, $s_2$, and $s_3$ such that $r$ commutes with $s_1$, $s_2$, and $s_3$, but no two of these commute with each other. If $r = (A,0)(\varepsilon,g)(A^{-1},0)$, let $s_1 = (A,0)(u_0,0)(A^{-1},0)$, $s_2 = (A,0)(u_1,0)(A^{-1},0)$, and $s_3 = (A,0)(u_2,0)(A^{-1},0)$. Then $r$ commutes with $s_1$, $s_2$, and $s_3$ since $g$ fixes $u_0$, $u_1$, and $u_2$, but no two of $s_1$, $s_2$, and $s_3$ commute with each other as $u_0$, $u_1$, and $u_2$ do not commute with each other.
\end{proof}

\bibliography{References}
\bibliographystyle{alpha}

\end{document}